\newcommand{\Y}{\mathbb Y}
\newcommand{\eps}{\varepsilon}
\newtheorem{theorem}{Theorem}[section]
\newtheorem{proposition}[theorem]{Proposition}
\newtheorem{lemma}[theorem]{Lemma}
\newtheorem{definition}[theorem]{Definition}
\newtheorem{conjecture}[theorem]{Conjecture}
\newtheorem{corollary}[theorem]{Corollary}
\theoremstyle{remark}
\newtheorem{remark}[theorem]{Remark}
\renewcommand{\S}{\mathfrak S}
\newcommand{\pr}{\nearrow}
\renewcommand{\d}{\mathbf d_{\rm var}}
\newcommand{\di}{\mathbf d_{\infty}}
   \title{Stochastic monotonicity in Young graph and Thoma theorem.}
   \author{Alexey Bufetov\thanks{International Laboratory of Representation Theory and Mathematical Physics, Department of Mathematics, Higher School of Economics, Moscow, Russia, and Institute for
Information Transmission Problems of Russian Academy of Sciences, Moscow, Russia. E-mail:
alexey.bufetov@gmail.com} \and Vadim Gorin\thanks{Department of Mathematics, Massachusetts
Institute of Technology, Cambridge, MA, USA, and
 Institute for Information Transmission Problems of Russian Academy of Sciences, Moscow, Russia. E-mail: vadicgor@gmail.com}
}
\begin{document}
\maketitle

\begin{abstract}
 We show that the order on probability measures, inherited from
 the dominance order on the Young diagrams, is preserved under
 natural maps reducing the number of boxes in a diagram by $1$. As a
 corollary we give a new proof of the Thoma theorem on the
 structure of characters of the infinite symmetric group.

We present several conjectures generalizing our result. One of them (if it is true) would imply the
Kerov's conjecture on the classification of all homomorphisms from the algebra of symmetric
functions into $\mathbb R$ which are non-negative on Hall--Littlewood polynomials.
\end{abstract}

\section{Introduction}

\subsection{Problem setup and results}

For a number $n=0,1,2,\dots$, a partition $\lambda$ of $n$ is a sequence of integers $\lambda_1\ge\lambda_2\ge\dots\ge
0$ such that $|\lambda|=n$, where $|\lambda|=\sum_{i=1}^\infty\lambda_i$. We identify a partition $\lambda$ with the
Young diagram, which is a collection of $|\lambda|$ boxes with positive coordinates $(i,j)$ forming the following
set
$$
 \{(i,j)\subset \mathbb{Z}_{>0}\times\mathbb{Z}_{>0} \mid j\le \lambda_i \}.
$$
When drawing pictures we adopt the notation that the first index $i$ increases as we move down,
while the second index $j$ increases as we move to the right, cf.\ Figures \ref{Figure_cover} and
\ref{Figure_4diag}.

The Young graph $\Y=\bigcup_{n=0}^{\infty}\Y_n$ is a graded graph such that the vertices of $\Y_n$
are all partitions of $n$. In particular, $\Y_0$ contains only the empty partition
$\emptyset=(0,0,\dots)$. An edge joins $\lambda\in \Y_{n}$ with $\mu\in \Y_{n-1}$, $n\ge 1$, if and
only if $\lambda$ differs from $\mu$ by the addition of a single box, which we denote
$\mu\nearrow\lambda$.

For a Young diagram $\lambda$, its \emph{dimension}\footnote{The name originates in the fact that
$\dim(\lambda)$ coincides with the dimension of the irreducible representation of the symmetric
group $\S_n$ indexed by $\lambda$. Here $n=|\lambda|$.} denoted by $\dim(\lambda)$ is the number of
oriented paths in $\Y$ which start at $\emptyset$ and end at $\lambda$.

Let $M_n$ be a probability measure on $\Y_n$. Its \emph{projection} onto $\Y_{n-1}$ denoted by
$\pi^n_{n-1}M_n$ is defined via
$$
 (\pi^n_{n-1}M_n)(\mu)=\sum_{\lambda\in\Y_{n}:\, \mu\pr\lambda}
 \frac{\dim(\mu)}{\dim(\lambda)} M_n(\lambda).
$$
The definition readily implies that $\pi^n_{n-1}M_n$ is a probability measure. Iterating the maps
$M_n\mapsto \pi^n_{n-1}M_n$ one similarly defines the projection of $M_n$ onto $M_k$, $0\le k<n$,
denoted by $\pi^n_k M_n$.

\begin{definition} A sequence of measures $\{M_n\}_{n=0}^\infty$ is
called a \emph{coherent system} on $\Y$ if each $M_n$, $n=0,1,\dots$
is a probability measure on $\Y_n$ and for any $0\le k <n$ the
measure $M_k$ is the projection of $M_n$ onto $\Y_k$, i.e.\
$M_k=\pi^n_k M_n$.
\end{definition}

In last 40 years coherent systems on $\Y$ were enjoying lots of interest due to their connections
to several seemingly unrelated topics. First, one can show that they are in bijection with
normalized characters for the infinite symmetric group and have a close relation to the finite
factor and spherical representations of the latter, see \cite{VK_S}, \cite{Kerov_book},
\cite{Ok_S}. Second, there is a correspondence between such systems of measures and totally
positive upper triangular Toeplitz matrices, see \cite{Thoma}, \cite[Section 2.2]{Kerov_book},
\cite{Ok_S}. Third, they are naturally linked to combinatorial objects appearing in the study of
the Robinson--Schensted--Knuth correspondence, cf.\ \cite{VK_RSK}. Finally, several instances of
these systems, e.g.\ the celebrated Plancherel distributions, exhibit a remarkable asymptotic
behavior as $n\to\infty$ and, in particular, numerous connections to random matrices, see
\cite{BDJ}, \cite{BOO}, \cite{Ok}, \cite{J-Annals}, \cite{Kerov_CLT}, \cite{IvanovOlshanski}.

The classification of all coherent systems on $\Y$ (in an equivalent form) is now known as
\emph{Thoma theorem}. Its formulation uses the symmetric functions notations which we now
introduce. Let $\Lambda$ be the algebra of all symmetric functions in countably many variables
$x_1,x_2,\dots$, see e.g.\ \cite[Chapter 1, Section 2]{M}. One way to define $\Lambda$ is as an
algebra (over $\mathbb R$) of polynomials in Newton power sums $p_k$, $k=1,2,\dots$
$$
 p_k=x_1^k+x_2^k+x_3^k+\dots.
$$
An important linear basis of $\Lambda$ is formed by \emph{Schur symmetric functions} $s_\lambda$,
$\lambda\in\Y$, and we refer to \cite[Chapter 1, Section 3]{M} for the exact definition and
properties of $s_\lambda$.

We also define $\Omega$ to be the set of all pairs of sequences
$(\alpha,\beta)=(\alpha_1\ge\alpha_2\ge\dots\ge 0,\beta_1\ge\beta_2\ge\dots\ge 0),$ such that
$\sum_{i=1}^\infty (\alpha_i+\beta_i)\le 1$.

\begin{theorem}[Thoma theorem, cf.\ \cite{Thoma}, \cite{VK_S}, \cite{Ok_S}, \cite{KOO}, \cite{V_new}] \label{Theorem_Thoma} The set of all coherent
systems is a (Choquet) simplex, whose extreme points are parameterized by elements of $\Omega$. The
extreme system of measures $\{M_n^{(\alpha,\beta)}\}_{n=0}^{\infty}$ parameterized by
$(\alpha,\beta)\in\Omega$ is given by
\begin{equation}
\label{eq_ext_measure_def}
 M_n^{(\alpha,\beta)}(\lambda)=\dim(\lambda)
 s_\lambda(\alpha,\beta),
\end{equation}
where $s_\lambda(\alpha,\beta)$ is the image of $s_\lambda$ under
the algebra homomorphism from $\Lambda$ to $\mathbb R$ defined on
power sums $p_k$ via
\begin{equation}
\label{eq_specialization}
 p_1\mapsto p_1(\alpha,\beta)=1,\quad p_k\mapsto p_k(\alpha,\beta)= \sum_{i=1}^\infty \alpha_i^k + (-1)^{k-1}
 \sum_{i=1}^\infty \beta_i^k,\, k=1,2,\dots.
\end{equation}
\end{theorem}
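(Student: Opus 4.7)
The plan is to follow the Vershik--Kerov ergodic method, with the announced stochastic monotonicity theorem supplying the key convergence input that in earlier approaches required delicate character asymptotics.

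First I would dispose of the structural pieces. That the set of coherent systems is a Choquet simplex is classical: it is a projective limit of the finite-dimensional simplices $\mathcal{P}(\Y_n)$ under the affine maps $\pi^n_{n-1}$, hence a simplex by a general result of Lindenstrauss. To verify coherence of each $M_n^{(\alpha,\beta)}$, I would apply Pieri's rule $p_1\cdot s_\mu=\sum_{\lambda:\,\mu\pr\lambda}s_\lambda$ together with the normalization $p_1(\alpha,\beta)=1$, which gives $\pi^n_{n-1}M_n^{(\alpha,\beta)}=M_{n-1}^{(\alpha,\beta)}$ directly from the definitions. Non-negativity of $M_n^{(\alpha,\beta)}(\lambda)$ for $(\alpha,\beta)\in\Omega$ is the classical statement that $s_\lambda(\alpha,\beta)$ is a non-negative polynomial in the $\alpha_i,\beta_j$, realized via super-Schur functions.

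The substantive task is the classification of extreme coherent systems. By the Vershik--Kerov ergodic theorem, every extreme $\{M_n\}$ arises as a pointwise limit
\[
 M_n(\mu)=\lim_{N\to\infty}\dim(\mu)\,\frac{\dim(\lambda^{(N)}/\mu)}{\dim(\lambda^{(N)})}
\]
along $M_N$-almost every sequence $\lambda^{(N)}\in\Y_N$, where $\dim(\lambda^{(N)}/\mu)$ counts oriented paths from $\mu$ to $\lambda^{(N)}$ in $\Y$. Here is where stochastic monotonicity enters: by propagating dominance inequalities between delta measures $\delta_{\lambda^{(N)}}$ through all subsequent projections, I would show that the scaled row and column coordinates $\lambda^{(N)}_i/N$ and $(\lambda^{(N)})'_j/N$ must each converge as $N\to\infty$ to constants $\alpha_i\ge 0$ and $\beta_j\ge 0$ satisfying $\sum_i\alpha_i+\sum_j\beta_j\le 1$, i.e.\ to a point $(\alpha,\beta)\in\Omega$. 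A standard asymptotic computation of skew dimensions then identifies the limit of $\dim(\lambda^{(N)}/\mu)/\dim(\lambda^{(N)})$ with $s_\mu(\alpha,\beta)$, forcing $\{M_n\}=\{M_n^{(\alpha,\beta)}\}$. Extremality of each $M^{(\alpha,\beta)}$, and distinctness of the systems for different $(\alpha,\beta)$, follow from the representation as limits of extreme delta measures and from the fact that $(\alpha,\beta)$ is a measurable function of $\{M_n\}$ via those same almost-sure limits.

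The main obstacle is the monotonicity-driven convergence of row and column frequencies along the extreme sequence $\{\lambda^{(N)}\}$. Without stochastic monotonicity this step requires the intricate character-theoretic arguments of the classical proofs; the novelty of the present approach is to replace that analysis by a direct probabilistic comparison, and making the comparison sharp enough to exclude oscillations of individual rows or columns, as well as exchanges of mass between them along $\{\lambda^{(N)}\}$, is where the real technical work lies.
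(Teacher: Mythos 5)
There is a genuine gap, and it sits exactly at the step your proposal labels ``standard''. The claim that, once $\lambda^{(N)}_i/N\to\alpha_i$ and $(\lambda^{(N)})'_j/N\to\beta_j$, the ratios $\dim(\lambda^{(N)}/\mu)/\dim(\lambda^{(N)})$ converge to $s_\mu(\alpha,\beta)$ is not a routine asymptotic computation: it is the analytic/algebraic core of the classical proofs of Vershik--Kerov and Kerov--Okounkov--Olshanski, carried out there via shifted Schur functions, and it is precisely the step this paper is designed to avoid. If you take that asymptotics as a black box, your argument collapses to the classical one and the stochastic monotonicity theorem becomes superfluous (subsequential convergence of the frequencies, which compactness of $[0,1]^\infty$ gives for free, already suffices). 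If you do not take it as given, your proposal contains no mechanism at all for identifying the limiting coherent system with $M_n^{(\alpha,\beta)}$.

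Relatedly, the role you assign to Theorem \ref{Theorem_monotonicity} --- forcing the frequencies $\lambda^{(N)}_i/N$, $(\lambda^{(N)})'_j/N$ to converge along the full extreme sequence --- is neither established in your sketch (you yourself flag the exclusion of oscillations as ``where the real technical work lies'' and leave it unfilled) nor the role it plays in the paper, which never proves such convergence and simply passes to a convergent subsequence. The paper's actual substitute for the skew-dimension asymptotics is the combination of the Law of Large Numbers for the extreme measures themselves (Theorem \ref{Theorem_LLN}, proved independently, e.g.\ via the Robinson--Schensted correspondence) with the monotonicity theorem: Lemma \ref{Lemma_between} sandwiches the atom $\rho_{\lambda(k(n))}$, up to mass $\eps$, between measures close to $M_{k(n)}^{(\alpha^-,\beta^-)}$ and $M_{k(n)}^{(\alpha^+,\beta^+)}$ with $\di((\alpha^-,\beta^-),(\alpha^+,\beta^+))<\eps$; Theorem \ref{Theorem_monotonicity} propagates this sandwich down to level $r$, forcing $\pi^{k(n)}_r\rho_{\lambda(k(n))}$ to be close in total variation to $M_r^{(\alpha^-,\beta^-)}$; and Lemma \ref{Lemma_closed} then identifies the limit as some $M_r^{(\alpha,\beta)}$ with $(\alpha,\beta)\in\Omega$. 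Your proposal uses neither the Law of Large Numbers nor any sandwich argument, so the monotonicity theorem never actually does any work in it; to repair the proof you would need to incorporate both ingredients.
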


One of the aims of our article is to give a new proof of Theorem \ref{Theorem_Thoma} based on a
monotonicity--preservation property that we will now present. Our proof of Thoma theorem is based
on the combinatorial and probabilistic ideas only; other existing proofs use highly nontrivial
analytic \cite{Thoma} or algebraic \cite{VK_S,KOO,Ok_S} methods (see, however, \cite{V_new}).
 We hope that the strategy used in our
proof of Theorem \ref{Theorem_Thoma} could be used in the future to establish the validity of a
generalization of Theorem \ref{Theorem_Thoma} known as the \emph{Kerov's conjecture}, see Section
\ref{Section_generalization_intro} for more details.

\smallskip

Let us equip $\Y_n$ with a partial order known as \emph{dominance order}. For $\lambda,\mu\in\Y_n$
we write $\lambda\ge \mu$, if for all $k=1,2,\dots$ we have
$$
\lambda_1+\lambda_2+\dots+\lambda_k\ge \mu_1+\mu_2+\dots+\mu_k.
$$
Further, we say that a measure $\rho$ on $\Y_n$ is an atom if its
support consists of a single element and write $\sup(\rho)$ for this
element. Note that we allow the mass of $\rho$ to be different from
$1$ here.

\begin{definition} \label{Def_dominance} Let $\rho$ and $\rho'$ be two
measures on $\Y_n$ of the same total mass, i.e. $\rho(\Y_n)=\rho'(\Y_n)$. We say that $\rho$
stochastically dominates $\rho'$ and write $\rho\ge \rho'$, if there exist $k>0$ and $2k$ measures
$\rho_1,\dots,\rho_k$, $\rho'_1,\dots,\rho'_k$, such that $ \rho=\sum_{i=1}^k \rho_i$,
$\rho'_i=\sum_{i=1}^k \rho'_i,$ and, moreover, $\rho_i$, $\rho'_i$ are atoms of the same mass and
with $\sup(\rho_i)\ge \sup(\rho'_i)$ for each $i=1,\dots,k$.
\end{definition}
Informally, Definition \ref{Def_dominance} means that $\rho$ can be
obtained from $\rho'$ by moving masses up with respect to our
partial order.

\begin{theorem} \label{Theorem_monotonicity}
  Take $0\le k<n$ and let $\rho$ and $\rho'$ be two
measures on $\Y_n$ of the same total mass. If $\rho\ge \rho'$, then the same is true for their
projections on $\Y_k$, i.e.\ $\pi^n_k \rho\ge \pi^n_k \rho'$.
\end{theorem}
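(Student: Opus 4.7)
The plan is to perform a sequence of standard reductions and then isolate one combinatorial step. First, since $\pi^n_k$ is the composition of the single-step projections $\pi^{m+1}_m$ and the relation $\ge$ on measures is transitive (compose the couplings given by the atomic decompositions of Definition~\ref{Def_dominance}), it suffices to prove the theorem when $k=n-1$. Second, $\pi^n_{n-1}$ is linear on measures; applying it to each matched pair of atoms $(\rho_i,\rho'_i)$ of Definition~\ref{Def_dominance} reduces the problem to establishing $\pi^n_{n-1}(\delta_\lambda)\ge\pi^n_{n-1}(\delta_\mu)$ for every $\lambda\ge\mu$ in $\Y_n$. Third, since the dominance order on $\Y_n$ is generated by its covering relations, another use of transitivity of $\ge$ reduces the problem to the case when $\lambda$ covers $\mu$.

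Using the classical description of covers in the dominance order, $\lambda$ covers $\mu$ iff there exist $i<j$ with $\mu_i=\lambda_i-1$, $\mu_j=\lambda_j+1$, $\mu_k=\lambda_k$ for $k\notin\{i,j\}$, and (when $j>i+1$) $\lambda_{i+1}=\dots=\lambda_{j-1}=\lambda_i-1=\lambda_j+1$. Thus $\lambda$ and $\mu$ differ only in rows $i,\dots,j$ and share the same removable corners outside these rows. I plan to build the monotone coupling of $\pi^n_{n-1}(\delta_\lambda)$ and $\pi^n_{n-1}(\delta_\mu)$ by pairing, for each removable corner in a row $k\notin\{i,\dots,j\}$, the removal of that corner from $\lambda$ with the removal of the same corner from $\mu$, and by matching corners inside the affected rows according to a bijection suggested by the single box-move taking $\lambda$ to $\mu$. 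One then checks that in each such pair the first shape dominates the second in $\Y_{n-1}$.

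The subtle point is matching the probability weights $\dim(\nu)/\dim(\lambda)$ and $\dim(\nu')/\dim(\mu)$: since the two normalizations differ, a corner-by-corner pairing cannot produce matched atoms of equal mass, and mass must be redistributed across corners. The cleanest route I see is to verify the equivalent Strassen-type criterion: for every up-set $U\subseteq\Y_{n-1}$,
$$
 \sum_{\nu\in U,\ \nu\pr\lambda}\frac{\dim(\nu)}{\dim(\lambda)}\ \ge\ \sum_{\nu\in U,\ \nu\pr\mu}\frac{\dim(\nu)}{\dim(\mu)},
$$
after which the atomic decomposition required by Definition~\ref{Def_dominance} can be read off from the resulting monotone coupling. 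This weighted inequality is where I expect the main obstacle to lie. I would attempt to reduce it to a finite identity in dimension values using the branching rule $\dim(\lambda)=\sum_{\nu\pr\lambda}\dim(\nu)$ together with the hook-length formula, and then verify it by careful local analysis at rows $i,\dots,j$, possibly by induction on $j-i$.
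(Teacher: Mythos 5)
Your sequence of reductions --- first to $k=n-1$, then to a pair of atoms $\delta_\lambda\ge\delta_{\hat\lambda}$, then (by transitivity of both orders) to the case where $\lambda$ covers $\hat\lambda$ in dominance order --- is exactly the paper's, and your description of the covering relation agrees with the paper's Definition \ref{Def_cover} (a single box moved down one row or left one column). The framing via a monotone coupling, or equivalently the up-set criterion, is also a legitimate reformulation of Definition \ref{Def_dominance} on a finite poset.

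The problem is that you stop exactly where the theorem starts. The weighted inequality over up-sets that you single out as ``where I expect the main obstacle to lie'' \emph{is} the theorem, and ``reduce it to a finite identity using the branching rule and the hook-length formula, then verify by careful local analysis'' is a plan, not an argument. The content you are missing is a specific dichotomy: if $\lambda,\hat\lambda$ differ by moving a box from row $i$ to row $\hat i>i$, and $\mu=\lambda\setminus(r,c)$, $\hat\mu=\hat\lambda\setminus(r,c)$ are obtained by deleting a removable corner common to both, then
$$
 \frac{\dim(\hat\mu)}{\dim(\hat\lambda)}\ \ge\ \frac{\dim(\mu)}{\dim(\lambda)} \ \text{ if } r<i,
 \qquad\qquad
 \frac{\dim(\hat\mu)}{\dim(\hat\lambda)}\ \le\ \frac{\dim(\mu)}{\dim(\lambda)} \ \text{ if } r>\hat i,
$$
so the direction of the mass comparison \emph{flips} according to whether the deleted corner lies above the moved box or below it --- a sign change your sketch does not anticipate. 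This is Corollary \ref{Cor_elementary_dimensions} of the paper, proved not via hook lengths but via the Weyl dimension formula for $s_\lambda(1^N)$: after cancellation the claim collapses to the elementary inequality $x^2(y^2-1)\ge(x^2-1)y^2$ for $y\ge x>0$, and the limit $\dim(\lambda)=\lim_{N\to\infty}s_\lambda(1^N)/N^{|\lambda|}$ transfers it to dimensions. Given this dichotomy, the coupling is assembled from three easy observations: the corners of $\lambda$ (and of $\hat\lambda$) yield diagrams linearly ordered in dominance by the row number $r$ of the deleted corner; every diagram obtained from $\lambda$ by deleting a corner in a row $r\ge i$ dominates every diagram obtained from $\hat\lambda$ by deleting a corner in a row $r\le\hat i$; and the two displayed inequalities determine on which end of each chain the excess mass sits. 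Without the displayed inequalities, or an equivalent computation actually carried out, your proof does not close.
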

 We prove Theorem \ref{Theorem_monotonicity} in Section \ref{Section_monoton}. Our proof is based
 on inequalities for the dimensions in Young graph presented in Corollary
 \ref{Cor_elementary_dimensions}. We also explain that these inequalities admit natural
 generalizations to the statements about the monomial positivity of certain quadratic expressions
 in Schur polynomials; we do not know a proof for the latter monomial positivity and present it as
 Conjecture \ref{Conj_monomial_positive}.

 In
 Section \ref{Section_Thoma}, we combine Theorem \ref{Theorem_monotonicity}
 with the Law of Large Numbers for a subclass
 of extreme coherent systems and deduce Theorem \ref{Theorem_Thoma}.
 Finally, in Section \ref{Section_LLN} we recall the aforementioned Law of Large Numbers and explain
 several known strategies of its proof.

\subsection{$t$--Deformation and Kerov's conjecture}
\label{Section_generalization_intro}

Theorem \ref{Theorem_Thoma} is known (see e.g.\ \cite{Kerov_book}) to be equivalent to the
following description of all Schur--positive homomorphisms from $\Lambda$ into $\mathbb R$.
\begin{theorem} \label{Theorem_homo}
 The set of algebra homomorphisms $\varrho:\Lambda\to\mathbb R$ normalized by the condition
 $\varrho(p_1)=1$ and such that $\varrho(s_\lambda)\ge 0$ for all $\lambda\in\Y$, is in bijection with
 $\Omega$.
 The homomorphism corresponding to $(\alpha,\beta)\in\Omega$ is defined by its values
 on power sums $p_k$
  \begin{equation}
\label{eq_specializations2}
 p_1\mapsto p_1(\alpha,\beta)=1,\quad p_k\mapsto p_k(\alpha,\beta)= \sum_{i=1}^\infty \alpha_i^k + (-1)^{k-1}
 \sum_{i=1}^\infty \beta_i^k,\, k=1,2,\dots.
\end{equation}
\end{theorem}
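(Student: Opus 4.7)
The plan is to establish a bijection between normalized Schur-positive homomorphisms $\varrho$ and extreme coherent systems on $\Y$ via $M_n(\lambda)=\dim(\lambda)\,\varrho(s_\lambda)$, and then to read off the parametrization from Theorem \ref{Theorem_Thoma}.

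First, given a normalized Schur-positive homomorphism $\varrho$, I would check that $M_n(\lambda) := \dim(\lambda)\varrho(s_\lambda)$ defines a coherent system. Non-negativity is built in; for the total mass I would use the Schur expansion $p_1^n = \sum_{|\lambda|=n}\dim(\lambda)\, s_\lambda$ (the decomposition of the regular $\S_n$-character), which gives $\sum_\lambda M_n(\lambda) = \varrho(p_1^n) = \varrho(p_1)^n = 1$ by multiplicativity. Coherence reduces, after cancelling $\dim(\mu)/\dim(\lambda)$, to the identity $\sum_{\lambda:\,\mu\pr\lambda}\varrho(s_\lambda) = \varrho(s_\mu)$, which follows from the Pieri rule $p_1 s_\mu = \sum_{\lambda:\,\mu\pr\lambda} s_\lambda$ combined with $\varrho(p_1)=1$ and multiplicativity.

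The more delicate point is to show that the coherent system $\{M_n\}$ thus produced must be \emph{extreme}. By the Choquet simplex structure of Theorem \ref{Theorem_Thoma}, we may write $M_n = \int_\Omega M_n^{(\alpha,\beta)}\, d\nu(\alpha,\beta)$ for a unique probability measure $\nu$ on $\Omega$, and therefore $\varrho(f) = \int_\Omega f(\alpha,\beta)\, d\nu(\alpha,\beta)$ for every $f\in\Lambda$, where $f(\alpha,\beta)$ denotes the specialization \eqref{eq_specialization}. The multiplicativity of $\varrho$ then translates into
\begin{equation*}
\int_\Omega f(\alpha,\beta)\, g(\alpha,\beta)\, d\nu \;=\; \int_\Omega f\, d\nu\cdot \int_\Omega g\, d\nu\qquad\text{for all } f,g\in\Lambda,
\end{equation*}
so every specialized symmetric function has zero variance under $\nu$ and is therefore $\nu$-almost surely constant. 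Since the power sums $p_k(\alpha,\beta)$, $k\ge 1$, separate points of $\Omega$, the measure $\nu$ must be a Dirac mass at some $(\alpha,\beta)\in\Omega$, and consequently $\varrho(p_k)=p_k(\alpha,\beta)$ as asserted in \eqref{eq_specializations2}.

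Conversely, for any $(\alpha,\beta)\in\Omega$ the prescription \eqref{eq_specializations2} does define an algebra homomorphism (the power sums $p_1,p_2,\ldots$ are free generators of $\Lambda$), is normalized by construction, and is Schur-positive since $s_\lambda(\alpha,\beta)\ge 0$ is exactly the non-negativity of $M_n^{(\alpha,\beta)}$ in Theorem \ref{Theorem_Thoma}. Distinct points of $\Omega$ give distinct homomorphisms because their values on the power sums already differ. I expect the extremality step of the preceding paragraph to be the main obstacle: it is essential that $\varrho$ is multiplicative rather than merely linear, which is precisely what forces correspondence with the \emph{extreme} points of the coherent simplex rather than with all coherent systems.
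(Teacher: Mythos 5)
Your argument is correct, and it supplies in full the equivalence that the paper itself only asserts: the paper gives no proof of Theorem \ref{Theorem_homo}, merely citing \cite{Kerov_book} for its equivalence with Theorem \ref{Theorem_Thoma}, and your proposal is exactly a derivation of the former from the latter. The forward direction (coherence via $p_1 s_\mu=\sum_{\lambda:\,\mu\pr\lambda}s_\lambda$ and total mass via $p_1^n=\sum_{|\lambda|=n}\dim(\lambda)s_\lambda$) is right, and your zero-variance trick is a clean way to handle the delicate direction: writing $\varrho(f)=\int_\Omega f(\alpha,\beta)\,d\nu$ (legitimate, since the $s_\lambda$ are a linear basis and $0\le s_\lambda(\alpha,\beta)\le 1/\dim(\lambda)$ is bounded on $\Omega$), multiplicativity gives $\int f^2\,d\nu=(\int f\,d\nu)^2$, so each $p_k(\alpha,\beta)$ is $\nu$-a.s.\ constant, and intersecting countably many full-measure sets forces $\nu$ to be a Dirac mass. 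This bypasses the usual ``Vershik--Kerov ring theorem'' route (extreme $\Leftrightarrow$ multiplicative), of which you only need the easy half because Theorem \ref{Theorem_Thoma} already exhibits the extreme points as multiplicative. The one assertion you should justify rather than merely state is that the functions $(\alpha,\beta)\mapsto p_k(\alpha,\beta)$, $k\ge 2$, separate points of $\Omega$: this follows by observing that $\sum_{k\ge 2}p_k(\alpha,\beta)z^k=\sum_i\frac{(\alpha_i z)^2}{1-\alpha_i z}-\sum_i\frac{(\beta_i z)^2}{1+\beta_i z}$ extends to a meromorphic function whose poles on the positive and negative real axes recover the $\alpha_i$ and $\beta_i$ with multiplicities. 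With that line added, the proof is complete.
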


A natural way to generalize Theorem \ref{Theorem_homo} is by replacing Schur functions $s_\lambda$
by other classes of symmetric functions. Kerov conjectured 20 years ago that when $s_\lambda$ are
replaced by their celebrated $(q,t)$-deformation --- Macdonald polynomials $M_\lambda(\cdot;q,t)$
--- then (for $0\le q <1$, $0\le t<1$) the Macdonald--positive homomorphisms are still in bijection
with elements of $\Omega$. The conjectural correspondence is established through the formulas very
similar to \eqref{eq_specializations2}, see \cite[Chapter II, Section 9]{Kerov_book} for the
details. The completeness of the Kerov's list of homomorphisms is still an open problem (though it
is relatively easy to show that all these homomorphisms are indeed Macdonald--positive, see e.g.\
\cite[Section 2.2.1]{BC}). Recently, these homomorphisms have been actively used for the asymptotic
analysis of a variety of probabilistic systems in the framework of Macdonald processes, see
\cite{BC}, \cite{BCGS}.

The $q=0$ versions of Macdonald polynomials are the Hall--Littlewood polynomials, see \cite{M}.
This particular case of the Kerov's conjecture is especially interesting, since when $t=p^{-1}$ the
conjecture is equivalent to the (conjectural) classification of all conjugation invariant ergodic
measures on infinite uni--uppertriangular matrices over a finite field with $p$ elements $\mathbb
F_p$, see \cite[Section 4]{GKV}.

Recently a progress on the $t$--deformation of Theorem \ref{Theorem_Thoma} (equivalent to the
Hall-Littlewood case of Kerov's conjecture, see \cite[Section 4]{GKV} and \cite[Section
4.2]{Fulman} for the details) was achieved in \cite{BufPet}, where the Law of Large Numbers for the
measures arising in it was proved. We thus hope that our approach to the proof of Theorem
\ref{Theorem_Thoma} can be extended to the Hall--Littlewood case of Kerov's conjecture. More
precisely, if one tries to mimic our approach, then the conjecture at $t=p^{-1}$ reduces to the
following inequality.

Let $U_n$ be the group of all uni--uppertriangular matrices over $\mathbb F_p$. Note that for each
$u\in\ U_n$ all its eigenvalues are $1$s and thus we can assign to it a unique Young diagram
$\mathcal J (u)\in\Y_n$ whose row lengths are sizes of the blocks in Jordan Normal Form of $u$. We
define
$$
 \dim_t(\lambda)=|\{ u\in U_n\mid \mathcal J(u)=\lambda\}|.
$$
Further, for any $u\in U_{n}$ we set $u^{(n-1)}\in U_{n-1}$ to be its top--left $(n-1)\times(n-1)$
corner, and define for $\mu\in\Y_{n-1}$, $\lambda\in\Y_n$
$$
 \dim_t(\mu\nearrow\lambda)=|\{u\in U_n \mid \mathcal J(u^{(n-1)})=\mu,\, \mathcal J(u)=\lambda \}|.
$$
We remark that \cite[Theorem 2.3]{Borodin_Haar} (see also \cite{Kir}) gives an explicit formula for
the ratio $\frac{\dim_t(\mu\nearrow\lambda)}{\dim_t(\mu)}$, which, in particular, implies that
$\dim_t(\mu\nearrow\lambda)$ vanishes unless $\mu\nearrow\lambda$.

\begin{conjecture}
 \label{Conj_elementary_dimensions_t}
 Let $\lambda,\hat \lambda\in Y_n$ and $\mu,\hat \mu\in Y_{n-1}$ be two
 pairs of Young diagrams, such that both $\lambda$,$\hat \lambda$  and $\mu$,$\hat \mu$ differ
 by the move of box $(i,j)$ into the position $(\hat i,\hat j)$ with $\hat i> i$.
 Further, assume that $\lambda\setminus\mu=\hat \lambda\setminus\hat \mu=(r,c)$, cf.\ Figures
 \ref{Figure_cover} and
 \ref{Figure_4diag}.
 If $r<i$ then
 \begin{equation}
 \label{eq_monoton_elementary_1_dim_t}
  \frac{\dim_t(\hat\mu\nearrow\hat\lambda)}{\dim_t(\hat \lambda)} \ge
  \frac{\dim_t(\mu\nearrow\lambda)}{\dim_t(\lambda)}.
 \end{equation}
 If $r>\hat i$, then
 \begin{equation}
 \label{eq_monoton_elementary_2_dim_t}
  \frac{\dim_t(\hat\mu\nearrow\hat\lambda)}{\dim_t(\hat \lambda)} \le \frac{\dim_t(\mu\nearrow\lambda)}{\dim_t(\lambda)}.
 \end{equation}
\end{conjecture}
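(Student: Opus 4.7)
The plan is to imitate the proof of the $t=1$ case (Corollary \ref{Cor_elementary_dimensions}, which underlies Theorem \ref{Theorem_monotonicity}), replacing the hook-length formula for $\dim(\lambda)$ by the explicit Borodin--Kirillov formula for $\dim_t(\mu\nearrow\lambda)/\dim_t(\mu)$ from \cite{Borodin_Haar} and \cite{Kir}. Concretely, I would first combine that formula with the identity
$$\dim_t(\lambda) = \sum_{\mu' \nearrow \lambda} \dim_t(\mu' \nearrow \lambda)$$
to obtain a closed product form for $\dim_t(\mu\nearrow\lambda)/\dim_t(\lambda)$ depending only on $\lambda$ and on the position $(r,c) = \lambda\setminus\mu$ of the removed box. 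The inequalities \eqref{eq_monoton_elementary_1_dim_t} and \eqref{eq_monoton_elementary_2_dim_t} would then be proved by plugging the pairs $(\lambda,\mu)$ and $(\hat\lambda,\hat\mu)$ into this closed form and performing a term-by-term comparison.

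Because $\hat\lambda$ differs from $\lambda$ only in rows $i$ and $\hat i$ (and likewise for $\hat\mu$, $\mu$), the vast majority of factors in the two closed-form expressions will cancel. In case \eqref{eq_monoton_elementary_1_dim_t} the hypothesis $r < i < \hat i$ means the added box sits strictly above both affected rows; consequently the arm-type factors in row $r$ are the same for $\lambda$ and $\hat\lambda$, and only the leg-type factors---determined by which rows below $r$ have a box in column $c$---can change. A short case analysis splitting on whether $c<j$, $j\le c\le \hat j$, or $c>\hat j$ should then give the stated inequality. Case \eqref{eq_monoton_elementary_2_dim_t} is handled symmetrically: now $r>\hat i > i$, so the leg factors agree while the arm factors (and the $p$-power prefactor encoding the index of the removed row) differ, and the inequality flips.

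I expect the main difficulty to lie in the combinatorial bookkeeping: the Borodin--Kirillov formula contains several $(1-p^{-k})$-type factors, and tracking precisely which of them change when the box is moved from $(i,j)$ to $(\hat i,\hat j)$---and verifying that all the changes point in the right direction---is considerably more involved than in the $t=1$ analog. A related subtlety is that the prefactor of the formula is a power of $p$ whose exponent depends nontrivially on the shape of $\lambda$, so one must also check that the change in this exponent has the correct sign. A more structural, but presumably harder, alternative would be to exhibit an explicit dominating coupling between the sets of matrices in $U_n$ counted by $\dim_t(\lambda)$ and $\dim_t(\hat\lambda)$, compatible with the Jordan type of the $(n-1)\times(n-1)$ corner; this would bypass the explicit formula entirely, but we do not currently see how to construct it.
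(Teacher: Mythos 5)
The statement you were asked to prove is presented in the paper as Conjecture \ref{Conj_elementary_dimensions_t}, and the paper contains no proof of it: the authors state explicitly that computer checks support this conjecture (and its Hall--Littlewood reformulation, Conjecture \ref{Conjecture_elementary_move}) ``but we have not found a proof,'' and they remark that the strategy used for the $t=0$ case (Proposition \ref{Prop_elementary_move}) fails for general $t$ because no simple analogue of the Weyl dimension formula is available. So there is no proof in the paper to compare against, and your proposal has to stand on its own.

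On its own terms it is a plan, not a proof, and the plan defers exactly the step that constitutes the entire content of the conjecture. A closed product form for $\frac{\dim_t(\mu\nearrow\lambda)}{\dim_t(\lambda)}$ does exist (via Borodin's formula for $\frac{\dim_t(\mu\nearrow\lambda)}{\dim_t(\mu)}$ combined with the known product formula for $\dim_t(\lambda)$; note that your identity $\dim_t(\lambda)=\sum_{\mu'\nearrow\lambda}\dim_t(\mu'\nearrow\lambda)$, while true, is not a useful route to it, since the summands involve $\dim_t(\mu')$ for distinct $\mu'$ and do not telescope into a product). But unlike the $t=0$ case, where everything cancels down to the two-variable inequality $x^2(y^2-1)\ge(x^2-1)y^2$, the $t$-deformed ratio involves several factors of the type $1-t^{\lambda'_c-\lambda'_{c+1}}$ together with a power-of-$t$ prefactor whose exponent depends on the shape; moving the box from $(i,j)$ to $(\hat i,\hat j)$ changes $\lambda'_j$ and $\lambda'_{\hat j}$, and these changes interact with the column-$c$ factors whenever $c$ is adjacent to $j$ or $\hat j$, so your claim that in case \eqref{eq_monoton_elementary_1_dim_t} ``only the leg-type factors can change'' is unjustified. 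The sentence ``a short case analysis \dots\ should then give the stated inequality'' is precisely the assertion the authors were unable to verify. Until that comparison is actually carried out and every changed factor is shown to move in the right direction --- or until the dominating coupling you mention at the end is actually constructed --- the statement remains open, exactly as it is in the paper.
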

This conjecture can be also restated as a certain inequality for the values of Hall--Littlewood
polynomials, and its generalization is formulated below in Conjecture
\ref{Conjecture_elementary_move}. Computer checks supply the validity of these conjectures, but we
have not found a proof.

\smallskip

At $t=1$ the Hall--Littlewood polynomials turn into the monomial symmetric functions and this case
of the Kerov's conjecture is equivalent to the Kingman's classification theorem for exchangeable
partition structures on $\mathbb Z_{>0}$, see \cite[Chapter I]{Kerov_book}. Both ingredients of our
approach, which are the $t=1$ versions of Conjecture \ref{Conj_elementary_dimensions_t} and the Law
of Large Numbers for the extreme coherent systems are especially simple and transparent in this
case. Thus, by mimicking our proof of Theorem \ref{Theorem_Thoma} one can also get a new proof of
the Kingman's classification theorem \cite{Ki}.

\bigskip

\noindent{\bf Acknowledgements.} A.~B.\ was partially supported by Simons Foundation-IUM
scholarship, by ``Dynasty'' foundation, and by the RFBR grant 13-01-12449.  V.~G.\ was partially
supported by the NSF grant DMS-1407562.

\section{Monotonicity in Young graph}
\label{Section_monoton}

This section is devoted to the proof of Theorem \ref{Theorem_Thoma}.

\subsection{Elementary moves}
\label{Section_elementary}
 First, let us introduce several
additional notations. We say that two distinct Young diagrams $\lambda\in \Y_n$ and $\hat
\lambda\in \Y_n$ differ by the move of box $(i,j)$ into the position $(\hat i,\hat j)$, if there
exists $\mu\in\Y_{n-1}$ such that $\mu=\lambda\setminus (i,j)=\hat \lambda\setminus(\hat i,\hat
j)$, see Figure \ref{Figure_cover} for an illustration. Note that we should have $\hat i\ne i$
Further, if $\hat i>i$, then $\lambda\ge\hat \lambda$ and if $\hat i<i$, then $\lambda\le\hat
\lambda$.

\begin{figure}[t]
\begin{center}
 {\scalebox{1.0}{\includegraphics{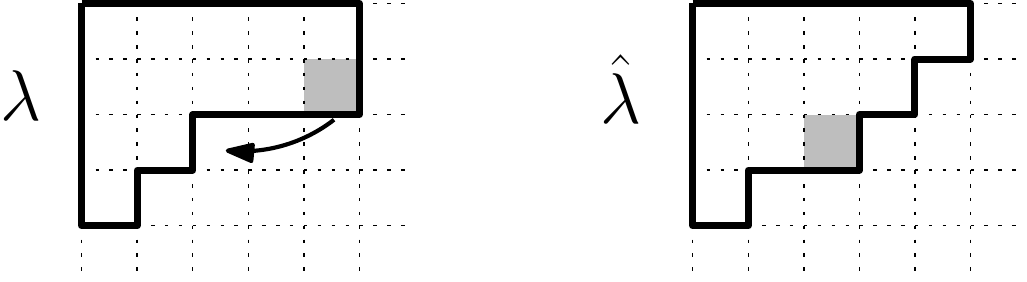}}}
 \caption{Young diagrams $\lambda$ and $\hat \lambda$ differing by the move of the box $(2,5)$ into
 the position $(3,3)$. Here $\lambda\ge\hat \lambda$ and also $\lambda\succ\hat \lambda$.
 \label{Figure_cover}}
\end{center}
\end{figure}

 Recall that for a Young diagram
$\lambda$, the numbers $\lambda'_1\ge\lambda'_2\ge\dots$ are defined as the column lengths of
$\lambda$, formally
$$
 \lambda'_j=|\{i\in\mathbb Z_{>0}: \lambda_i\ge j \}|.
$$
We also set $\ell(\lambda)$ to be the
number of non-zero rows in $\lambda$, i.e.\ $\ell(\lambda)=\lambda'_1$.

We evoke the ($N$--variable version of) Schur
symmetric function $s_\lambda$.  For any $N=1,2,\dots$ and Young diagram
$\lambda\in\Y$ such that $\ell(\lambda)\le N$, we have
$$
 s_\lambda(x_1,\dots,x_N)= \frac{\det_{i,j=1}^N\left[
 x_i^{\lambda_j+N-j}\right]}{\prod_{1\le i<j\le N} (x_i-x_j)}.
$$
Finally, we use the notation $1^N$ for
$(\underbrace{1,\dots,1}_{N})$.

Our proof of Theorem \ref{Theorem_monotonicity} relies on the
following statement.

\begin{figure}[t]
\begin{center}
 {\scalebox{1.0}{\includegraphics{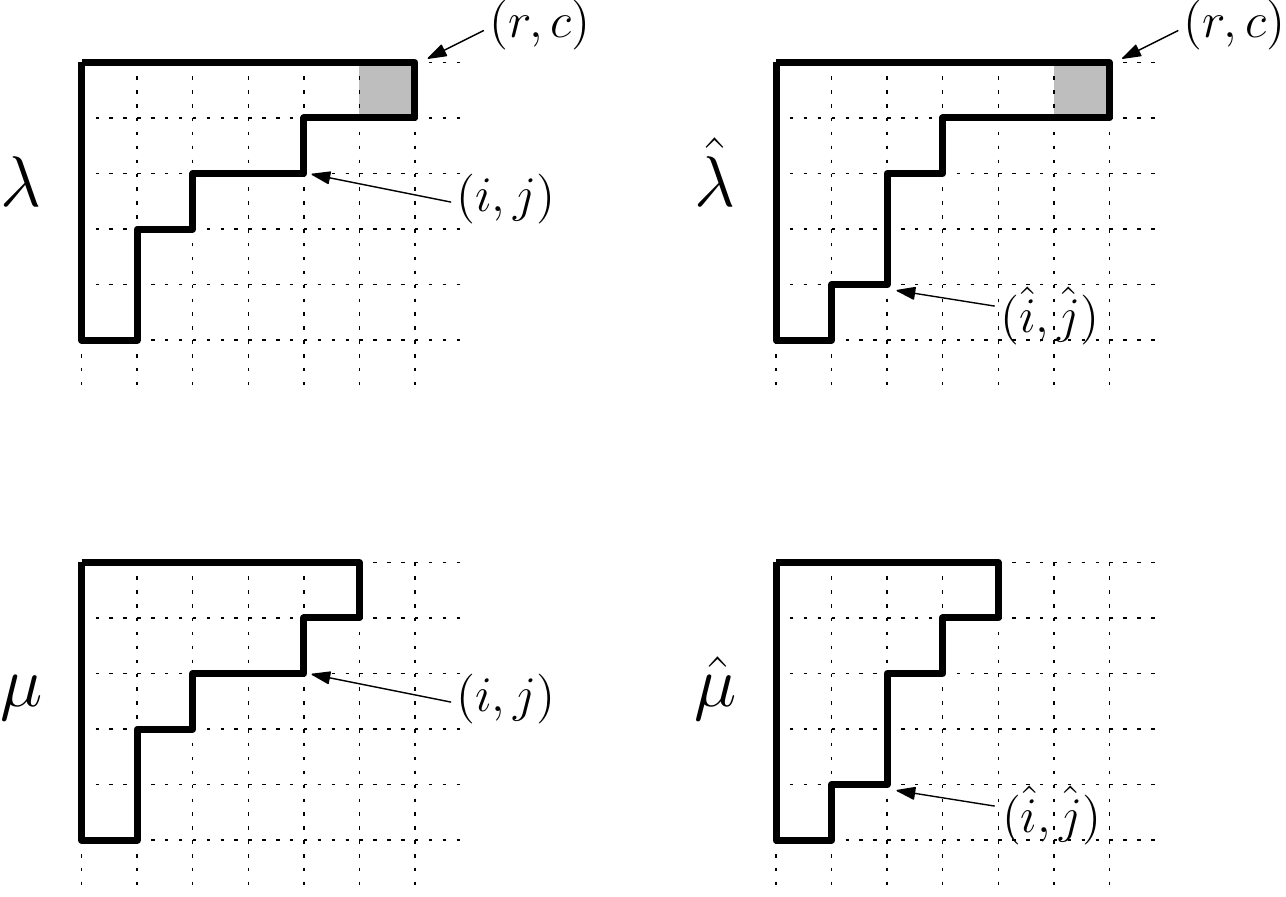}}}
 \caption{An example of Young diagrams $\lambda,\hat \lambda$ and $\mu,\hat \mu$ as in Proposition
 \ref{Prop_elementary_move}. Here the gray box is $(r,c)=(1,6)$, and $(i,j)=(2,4)$,
 $(\hat i,\hat j)=(4,2)$.
 \label{Figure_4diag}}
\end{center}
\end{figure}

\begin{proposition} \label{Prop_elementary_move}
 Let $\lambda,\hat \lambda\in Y_n$ and $\mu,\hat \mu\in Y_{n-1}$ be two
 pairs of Young diagrams, such that both $\lambda$,$\hat \lambda$  and $\mu$,$\hat \mu$ differ
 by the move of box $(i,j)$ into the position $(\hat i,\hat j)$ with $\hat i>i$.
 Further, assume that $\lambda\setminus\mu=\hat \lambda\setminus\hat \mu=(r,c)$, cf.\ Figure
 \ref{Figure_4diag}.
 Fix any integer $N\ge \ell(\hat \lambda)$. If $r<i$ then
 \begin{equation}
 \label{eq_monoton_elementary_1}
  s_\lambda(1^N) s_{\hat \mu}(1^N)\ge s_{\hat \lambda}(1^N) s_{\mu}(1^N).
 \end{equation}
 If $r>\hat i$, then
 \begin{equation}
 \label{eq_monoton_elementary_2}
  s_\lambda(1^N) s_{\hat \mu}(1^N)\le s_{\hat \lambda}(1^N) s_{\mu}(1^N).
 \end{equation}
\end{proposition}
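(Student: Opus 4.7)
My plan is to invoke the Weyl dimension formula
\[
 s_\lambda(1^N) \;=\; \prod_{1\le a<b\le N} \frac{\ell_a(\lambda)-\ell_b(\lambda)}{b-a}, \qquad \ell_a(\lambda):=\lambda_a+N-a,
\]
and its three analogues for $\hat\lambda,\mu,\hat\mu$, and then form the ratio $R:= s_\lambda(1^N)s_{\hat\mu}(1^N) / [s_{\hat\lambda}(1^N)s_\mu(1^N)]$. The denominators $b-a$ cancel entirely, so $R$ becomes a product over pairs $a<b$ of factors depending only on the four shifted coordinates at positions $a,b$.

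The rigid structure of the four diagrams now forces almost every factor to drop out. Since $\lambda,\hat\lambda$ agree outside rows $i,\hat i$, the factor for any pair $(a,b)$ with $\{a,b\}\cap\{i,\hat i\}=\emptyset$ equals $1$, and likewise on the $\mu,\hat\mu$-side. For a pair that meets $\{i,\hat i\}$ but avoids $r$, the shift at position $r$ never enters, so the $\lambda/\hat\lambda$-factor and the $\hat\mu/\mu$-factor are built from identical differences and are reciprocal, hence cancel in $R$. Only two factors survive: those for the pairs $\{r,i\}$ and $\{r,\hat i\}$.

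A short direct computation will then yield, in the regime $r<i$,
\[
 R \;=\; \frac{A^2}{A^2-1}\cdot\frac{B(B-2)}{(B-1)^2}, \qquad A:=\ell_r(\lambda)-\ell_i(\lambda),\ \ B:=\ell_r(\lambda)-\ell_{\hat i}(\lambda),
\]
with an analogous formula in the regime $r>\hat i$ obtained by swapping roles. Elementary algebra converts $R\ge 1$ into $(B-1)^2\ge A^2$; since $A$ and $B-1$ are positive (the former from $r<i$ combined with the removability of $(r,c)$ from $\lambda$, the latter from $r<\hat i$ combined with the removability of $(r,c)$ from $\hat\lambda$), this reduces to $B-1\ge A$. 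Finally,
\[
 B-1-A \;=\; (\lambda_i-\lambda_{\hat i})+(\hat i-i)-1 \;\ge\; 0
\]
holds because $\lambda_i\ge\lambda_{\hat i}$ and $\hat i-i\ge 1$. The case $r>\hat i$ is entirely parallel and produces the opposite inequality.

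The only real obstacle I foresee is bookkeeping: keeping straight which of the four diagrams contributes at each of the two surviving pairs and handling signs consistently across the two cases. Once the cancellations described above have been executed, the surviving arithmetic inequality is essentially trivial, and no genuine analytic or combinatorial input beyond monotonicity of the rows of $\lambda$ is required.
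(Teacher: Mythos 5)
Your proposal is correct and is essentially the paper's own argument: both apply the Weyl dimension formula, observe that all factors cancel except those coming from the pairs $\{r,i\}$ and $\{r,\hat i\}$, and reduce the claim to the elementary inequality $(B-1)^2\ge A^2$ (in the paper's notation $y^2\ge x^2$ with $x=A$, $y=B-1$), settled by the sign analysis $B-1\ge A>0$ for $r<i$ and $A\le B-1<0$ for $r>\hat i$. The remaining work is only the bookkeeping you already identify, and it goes through exactly as you describe.
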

\begin{proof}
 We recall the
 \emph{Weyl dimension formula} (see e.g.\
 \cite[Section 3, Exerceise 1]{M})
 $$
  s_\lambda(1^N)=\prod_{1\le a<b \le N} \frac{\lambda_a-a -
  \lambda_b+b}{b-a}
 $$
 and plug it into \eqref{eq_monoton_elementary_1}. Since $\lambda_a=\hat \lambda_a$ and
 $\mu_a=\hat \mu_a$ for $a\ne i,\hat i$, many factors on the left and right side cancel
 out, and \eqref{eq_monoton_elementary_1} turns into
 \begin{multline*}
  \prod_{\begin{smallmatrix} 1\le a\le N: \\ a \ne i
  \end{smallmatrix}} |\lambda_a-a-\lambda_i+i| \prod_{ \begin{smallmatrix}1\le a\le N: \\ a \ne \hat i\end{smallmatrix}}
  |\lambda_a -a-\lambda_{\hat i}+\hat i|
  \prod_{\begin{smallmatrix} 1\le a\le N: \\ a \ne i
  \end{smallmatrix}}
    |\hat \mu_a-a-\hat \mu_i+i|
  \prod_{\begin{smallmatrix} 1\le a\le N: \\ a \ne \hat i \end{smallmatrix}}
    |\hat \mu_a -a-\hat \mu_{\hat i}+\hat i|
\\ \stackrel{?}{\ge}
  \prod_{\begin{smallmatrix} 1\le a\le N: \\ a \ne i \end{smallmatrix}}
    |\hat \lambda_a-a-\hat \lambda_i+i|
  \prod_{\begin{smallmatrix} 1\le a\le N: \\ a \ne \hat i \end{smallmatrix}}
    |\hat \lambda_a -a-\hat \lambda_{\hat i}+\hat i|
  \prod_{\begin{smallmatrix} 1\le a\le N: \\ a \ne i \end{smallmatrix}}
    |\mu_a-a-\mu_i+i|
 \prod_{\begin{smallmatrix} 1\le a\le N: \\ a \ne \hat i  \end{smallmatrix}}
    |\mu_a -a-\mu_{\hat i}+\hat i|
 \end{multline*}
Since $\lambda_a=\mu_a$ and $\hat \lambda_a=\hat \mu_a$ for $a\ne r$, we can further cancel out the
factors to get
 \begin{multline*}
   (\lambda_r-r-\lambda_i+i)(\lambda_r -r-\lambda_{\hat i}+\hat i)
   (\hat \mu_r-r-\hat \mu_i+i) (\hat \mu_r -r-\hat \mu_{\hat i}+\hat i)
\\ \stackrel{?}{\ge}
    (\hat \lambda_r-r-\hat \lambda_i+i)(\hat \lambda_r -r-\hat \lambda_{\hat i}+\hat i)
    (\mu_r-r-\mu_i+i)(\mu_r -r-\mu_{\hat i}+\hat i).
 \end{multline*}
Rewriting everything in terms of the parts of $\lambda$, we get an
equivalent inequality
 \begin{multline*}
   (\lambda_r-r-\lambda_i+i)(\lambda_r -r-\lambda_{\hat i}+\hat i)
   (\lambda_r-r-\lambda_i+i) (\lambda_r-r-\lambda_{\hat i}+\hat i-2)
\\ \stackrel{?}{\ge}
    (\lambda_r-r-\lambda_i+i+1)(\lambda_r -r-\lambda_{\hat i}+\hat i-1)
    (\lambda_r-r-\lambda_i+i-1)(\lambda_r -r-\lambda_{\hat i}+\hat i-1).
 \end{multline*}
Further transforming, and denoting $\lambda_r-r-\lambda_i+i=x$, $\lambda_r -r-\lambda_{\hat i}+\hat
i-1=y$,
 we
get
 \begin{equation}
 \label{eq_monoton_equivalent}
   x^2( y^2-1) \stackrel{?}{\ge}
    (x^2-1)y^2.
 \end{equation}
Now when $r<i<\hat i$, then $y\ge x>0$ and \eqref{eq_monoton_equivalent} holds. Similarly, when
$r>\hat i>i$, then $0>y\ge x$ and the inequality opposite to $\eqref{eq_monoton_equivalent}$ holds.
\end{proof}
Based on computer computations we believe that the following two generalizations of Proposition
\ref{Prop_elementary_move} should hold.

Recall that a symmetric function $f(x_1,x_2,\dots)$ is called \emph{monomial positive} if the
coefficients of its expansion into monomials are non-negative.
\begin{conjecture} \label{Conj_monomial_positive}
 Let $\lambda,\hat \lambda\in Y_n$ and $\mu,\hat \mu\in Y_{n-1}$ be two
 pairs of Young diagrams, such that both $\lambda$,$\hat \lambda$  and $\mu$,$\hat \mu$ differ
 by the move of box $(i,j)$ into the position $(\hat i,\hat j)$ with $\hat i> i$.
 Further, assume that $\lambda\setminus\mu=\hat \lambda\setminus\hat \mu=(r,c)$, cf.\ Figure
 \ref{Figure_4diag}.
  If $r<i$ then
 $s_\lambda s_{\hat \mu}-s_{\hat \lambda} s_{\mu}$ is monomial--positive.
 If $r>\hat i$, then $s_{\hat \lambda} s_{\mu}-s_{\lambda} s_{\hat \mu}$
 is monomial--positive.
\end{conjecture}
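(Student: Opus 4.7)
The plan is to aim for the stronger statement that $s_\lambda s_{\hat\mu}-s_{\hat\lambda}s_\mu$ is Schur-positive when $r<i$ (and its negative when $r>\hat i$). By the Littlewood--Richardson rule this reduces to a coefficient-wise inequality
\begin{equation*}
c^\nu_{\lambda,\hat\mu}\ \ge\ c^\nu_{\hat\lambda,\mu}\qquad\text{for every }\nu\in\Y ,
\end{equation*}
between counts of Littlewood--Richardson skew tableaux. The two sides differ in a very controlled way: since $\lambda,\hat\lambda$ agree outside rows $i,\hat i$, the skew shapes $\nu/\lambda$ and $\nu/\hat\lambda$ differ by exchanging one cell in row $i$ for one cell in row $\hat i$; and since $\mu,\hat\mu$ agree outside the same rows, the content changes by replacing one copy of the letter $i$ with one copy of the letter $\hat i$.

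Given this local picture, I would attempt to construct a content-preserving injection $\Phi$ from LR tableaux of shape $\nu/\hat\lambda$ and content $\mu$ into LR tableaux of shape $\nu/\lambda$ and content $\hat\mu$. The natural candidate is a local jeu-de-taquin style slide combined with a Bender--Knuth toggle on the letters $i,\hat i$: transfer the extra cell of $\nu/\hat\lambda$ in row $i$ down to row $\hat i$, and simultaneously demote a single $i$-entry to $\hat i$. The hypothesis $r<i$ should enter through the Yamanouchi condition: the box $(r,c)$ of $\lambda$ sits strictly above rows $i,\hat i$, so the letter $1$ which the Yamanouchi word forces near $(r,c)$ lies above the zone in which the slide acts and is not disturbed. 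When $r>\hat i$ the picture is reflected and the inequality flips, matching the conjectural statement.

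Operationally I would argue by induction on $n=|\lambda|$, using the Pieri rule $s_\mu\,s_{(1)}=\sum_{\mu\pr\kappa}s_\kappa$ to peel off the removable box $(r,c)$ and reduce to a cross-inequality one step smaller. Proposition \ref{Prop_elementary_move}, whose principal specialization recovers the sum over $\nu$ at $1^N$, then provides a useful sanity check on any candidate $\Phi$: if the injection were wrong at a genuinely combinatorial level, it would already fail the $1^N$ count.

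The hard part, and presumably the reason the authors leave this as a conjecture, is that when the LR tableau is complicated in the rows between $r$ and $\hat i$, the slide must propagate through all of them while preserving the Yamanouchi and column-strict conditions and remaining injective — exactly the sort of delicate bookkeeping that has obstructed other closely related statements such as Okounkov's Schur log-concavity conjecture. If the LR-level injection resists, a weaker but sufficient fallback — matching precisely the monomial-positive form actually conjectured — would be to bypass LR entirely and construct a content-preserving injection directly between pairs of semistandard tableaux of shapes $(\hat\lambda,\mu)$ and $(\lambda,\hat\mu)$ via a local swap on rows $i,\hat i$; the Bender--Knuth toggle then applies without any Yamanouchi bookkeeping, at the cost of the Schur-positive refinement.
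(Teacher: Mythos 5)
This statement is presented in the paper as an open conjecture: the authors explicitly say that they do not know a proof of the monomial positivity and that their belief rests on computer checks. Your proposal does not close that gap. Everything hinges on the content-preserving injection $\Phi$ from LR tableaux of shape $\nu/\hat\lambda$ and content $\mu$ into LR tableaux of shape $\nu/\lambda$ and content $\hat\mu$ (or, in the fallback, on the analogous injection between pairs of semistandard tableaux), and you never construct it; you only name a ``natural candidate'' and then concede that making the slide propagate through the rows between $r$ and $\hat i$ while preserving column-strictness, the Yamanouchi condition, and injectivity is ``the hard part.'' That hard part \emph{is} the conjecture. A jeu-de-taquin slide does not let you prescribe in advance which cell of the shape moves --- the exit cell is determined by the tableau --- so ``transfer the extra cell of $\nu/\hat\lambda$ in row $i$ down to row $\hat i$ and demote an $i$ to an $\hat i$'' is not yet a well-defined map, and the Bender--Knuth involution toggles multiplicities of adjacent letters within a fixed shape, which is not the operation you need here. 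The Pieri-rule induction is also not a reduction: $s_\lambda$ is only one summand of $s_\mu s_{(1)}$, and the Pieri rule cannot be inverted to isolate it.

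Two further cautions. First, you aim for Schur positivity, which is strictly stronger than what is conjectured; the paper deliberately claims only monomial positivity, and the authors note that differences of the form $s_\lambda s_{\hat\mu}-s_{\hat\lambda}s_\mu$ fall outside the scope of the known Schur-positivity results of Lam--Postnikov--Pylyavskyy, so there is no evidence that the stronger statement holds. Second, the consistency check against Proposition \ref{Prop_elementary_move} only confirms the inequality of principal specializations $s_\lambda(1^N)s_{\hat\mu}(1^N)\ge s_{\hat\lambda}(1^N)s_\mu(1^N)$, which is far weaker than positivity of the difference coefficient by coefficient; passing the $1^N$ count rules nothing in. As it stands, your text is a reasonable research plan but contains no proof, which is consistent with the status of the statement in the paper.
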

\begin{remark}
 Monomial positivity (and even stronger Schur--positivity) of
 similar quadratic expressions has been intensively studied, see \cite{LPP}, \cite{LP} and
 references therein. However it seems that the differences of the form
 $s_\lambda s_{\hat \mu}-s_{\hat \lambda}s_{\mu}$ are out of the scope of those articles.
\end{remark}

Further, we recall the definition of ($N$--variable version of) Hall--Litlewood symmetric function
on a parameter $t\in\mathbb R$, and a Young diagram $\lambda$ such that $\ell(\lambda)\le N$, cf.
\cite[Chapter III]{M}
$$
 Q_\lambda(x_1,\dots,x_N;t)= (1-t)^{N} \prod\limits_{i=1}^{N-\ell(\lambda)} \frac{1}{1-t^i} \cdot  \sum_{\sigma\in \S(n)} x_{\sigma(1)}^{\lambda_1}\cdots
 x_{\sigma(N)}^{\lambda_N} \prod_{1\le i<j\le N} \frac{x_{\sigma(i)}-t
 x_{\sigma(j)}}{x_{\sigma(i)}-x_{\sigma(j)}}.
$$
Note the normalization that we use, and which is the same as in \cite{M}.

\begin{conjecture} \label{Conjecture_elementary_move}
 Suppose that $0\le t\le 1$ and let $\lambda,\hat \lambda\in Y_n$ and $\mu,\hat \mu\in Y_{n-1}$ be two
 pairs of Young diagrams, such that both $\lambda$,$\hat \lambda$  and $\mu$,$\hat \mu$ differ
 by the move of box $(i,j)$ into the position $(\hat i,\hat j)$ with $\hat i> i$.
 Further, assume that $\lambda\setminus\mu=\hat \lambda\setminus\hat \mu=(r,c)$, cf.\ Figure
 \ref{Figure_4diag}.
 Fix any integer $N\ge \ell(\hat \lambda)$. If $r<i$ then
 \begin{equation}
 \label{eq_HL_monoton_elementary_1}
\left(1-t^{\hat \lambda'_c-\hat \lambda'_{c+1}}\right)   \frac{Q_{\hat \mu}(1^N;t)}{Q_{\hat \lambda}(1^N;t)}
\ge \left(1-t^{\lambda'_c-\lambda'_{c+1}}\right) \frac{Q_{\mu}(1^N;t)}{Q_\lambda(1^N;t)} .
 \end{equation}
 If $r>\hat i$, then
 \begin{equation}
 \label{eq_HL_monoton_elementary_2}
\left(1-t^{\hat \lambda'_c-\hat \lambda'_{c+1}}\right)   \frac{Q_{\hat \mu}(1^N;t)}{Q_{\hat \lambda}(1^N;t)}
\le \left(1-t^{\lambda'_c-\lambda'_{c+1}}\right) \frac{Q_{\mu}(1^N;t)}{Q_\lambda(1^N;t)}.
 \end{equation}
\end{conjecture}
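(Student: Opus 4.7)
The plan is to mimic the proof of Proposition~\ref{Prop_elementary_move}, replacing the Weyl dimension formula by its Hall--Littlewood analog. Write $Q_\lambda = b_\lambda(t) P_\lambda$ with $b_\lambda(t)=\prod_{i\ge 1} \phi_{m_i(\lambda)}(t)$, $m_i(\lambda)=\lambda'_i-\lambda'_{i+1}$, and $\phi_r(t)=\prod_{k=1}^r (1-t^k)$. The principal specialization $P_\lambda(1^N;t)$ admits an explicit product formula (Macdonald~\cite{M}, Chapter~III) which is a $t$-deformation of the Weyl dimension formula and can be written as a product over pairs $1\le a<b\le N$ of $t$-bracket factors depending only on $\lambda_a-a$ and $\lambda_b-b$ (with an additional $t^{n(\lambda)}$ prefactor). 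At $t=0$ this collapses to the formula used in the proof of Proposition~\ref{Prop_elementary_move}.

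Next I would substitute this formula into the four quantities appearing in \eqref{eq_HL_monoton_elementary_1}. The same cancellations as in the Schur case apply: $\lambda_a=\hat\lambda_a$ and $\mu_a=\hat\mu_a$ for $a\notin\{i,\hat i\}$, while $\lambda_a=\mu_a$ and $\hat\lambda_a=\hat\mu_a$ for $a\ne r$, so the residual expression involves only the three rows $r,i,\hat i$. The new ingredient is the normalization $b_\lambda(t)$: when one passes from $\mu$ to $\lambda$ by adding a box in column $c$, the multiplicities $m_i$ of the parts change only around the value $c$, and a short bookkeeping computation should identify the ratio $b_\mu(t)/b_\lambda(t)$ with an explicit factor involving $(1-t^{\lambda'_c-\lambda'_{c+1}})$. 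I expect this to be exactly what matches the prefactors in the conjecture: once those prefactors are absorbed into the $Q$-normalization, the inequality \eqref{eq_HL_monoton_elementary_1} (respectively \eqref{eq_HL_monoton_elementary_2}) becomes a clean comparison of positive $t$-products in the three variables $\lambda_r-r$, $\lambda_i-i$, $\lambda_{\hat i}-\hat i$, in complete analogy with the Schur case.

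The main obstacle will be the residual elementary inequality in $t$. In the Schur case it reduced to $x^2(y^2-1)\ge (x^2-1)y^2$ for $y\ge x>0$, which is immediate; the $t$-analog should compare products of $t$-brackets $[m]_t=(1-t^m)/(1-t)$, taking a form such as $[x]_t^2\,[y-1]_t\,[y+1]_t\ge [x-1]_t\,[x+1]_t\,[y]_t^2$ for $0\le t\le 1$ and $y\ge x$, which ought to follow from log-concavity of $k\mapsto [k]_t$ together with monotonicity in $k$. The second case $r>\hat i$ reverses signs and should be handled by the same mechanism with the opposite inequality. Verifying that the bookkeeping really produces an expression of this clean shape (rather than a more entangled one that resists elementary manipulation) is where I expect the real difficulty; I would therefore begin by checking the full identification of prefactors on a small example (e.g.\ Figure~\ref{Figure_4diag}) before committing to the general argument, and only then prove the final $t$-bracket inequality.
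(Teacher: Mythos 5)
This statement is an open conjecture in the paper: the authors write that ``computer checks supply the validity of these conjectures, but we have not found a proof,'' so there is no proof of Conjecture~\ref{Conjecture_elementary_move} to compare yours against, and any complete argument here would be new mathematics. Unfortunately, your plan collapses at its very first step, and the authors themselves flag exactly this obstruction in the remark following the conjecture: for general $t$ there is no analogue of the Weyl dimension formula for $P_\lambda(1^N;t)$. The product formula you are recalling from Macdonald \cite{M}, Chapter~III, is for the \emph{principal} specialization $P_\lambda(1,t,t^2,\dots,t^{N-1};t)$, i.e.\ variables in geometric progression, where one indeed gets $t^{n(\lambda)}$ times a $t$-multinomial coefficient. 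The quantity in \eqref{eq_HL_monoton_elementary_1} is the \emph{all-ones} specialization $Q_\lambda(1,1,\dots,1;t)$, which is a different object and does not factor into $t$-brackets. A two-variable check already kills the hoped-for structure: $P_{(2)}(x_1,x_2;t)=x_1^2+x_2^2+(1-t)x_1x_2$, so $P_{(2)}(1,1;t)=3-t$, which vanishes at $t=3$ and hence cannot be written as a product of factors of the form $(1-t^{m})/(1-t^{m'})$ indexed by pairs of rows. Consequently there is no cancellation down to ``three rows $r,i,\hat i$,'' and the residual elementary $t$-bracket inequality you anticipate never appears.

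Two smaller points. First, the part of your bookkeeping concerning $Q_\lambda=b_\lambda(t)P_\lambda$ is consistent with how the conjecture is normalized: the prefactors $\bigl(1-t^{\lambda'_c-\lambda'_{c+1}}\bigr)$ do arise from comparing $b_\lambda$ with $b_\mu$ when a box is added in column $c$; that is not where the difficulty lies. Second, your proposed final inequality $[x]_t^2\,[y-1]_t\,[y+1]_t\ge [x-1]_t\,[x+1]_t\,[y]_t^2$ is a reasonable guess for a $t$-analogue, but without a product formula feeding into it, it is an answer to a question that the actual computation never poses. If you want to make progress on this conjecture, you will need a genuinely different mechanism --- for instance the representation-/matrix-theoretic reformulation via $\dim_t$ in Conjecture~\ref{Conj_elementary_dimensions_t} and the explicit transition ratios of \cite{Borodin_Haar} --- rather than a deformation of the proof of Proposition~\ref{Prop_elementary_move}.
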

\begin{remark} When $t=0$, Conjecture
\ref{Conjecture_elementary_move} turns into Proposition \ref{Prop_elementary_move}. When $t=1$, the
Hall--Littlewood functions $Q_\lambda(\cdot;t)$ turn into the monomial symmetric functions and the
validity of Conjecture \ref{Conjecture_elementary_move} can be similarly established (in fact,
inequalities turn into equalities in this case). For general $t$ we are not aware of any simple
analogues of the Weyl dimension formula for $P_\lambda(1^N;t)$ and the strategy employed in the
proof of Proposition \ref{Prop_elementary_move} fails.
\end{remark}

\subsection{Proof of Theorem \ref{Theorem_monotonicity}}
The following statement is an immediate corollary of Proposition
\ref{Prop_elementary_move}.
\begin{corollary}
 \label{Cor_elementary_dimensions}
 Let $\lambda,\hat \lambda\in Y_n$ and $\mu,\hat \mu\in Y_{n-1}$ be two
 pairs of Young diagrams, such that both $\lambda$,$\hat \lambda$  and $\mu$,$\hat \mu$ differ
 by the move of box $(i,j)$ into the position $(\hat i,\hat j)$ with $\hat i> i$.
 Further, assume that $\lambda\setminus\mu=\hat \lambda\setminus\hat \mu=(r,c)$, cf.\ Figure
 \ref{Figure_4diag}.
 If $r<i$, then
 \begin{equation}
 \label{eq_monoton_elementary_1_dim}
  \frac{\dim(\hat \mu)}{\dim(\hat \lambda)} \ge \frac{\dim(\mu)}{\dim(\lambda)}.
 \end{equation}
 If $r>\hat i$, then
 \begin{equation}
 \label{eq_monoton_elementary_2_dim}
  \frac{\dim(\hat \mu)}{\dim(\hat \lambda)} \le \frac{\dim(\mu)}{\dim(\lambda)}.
 \end{equation}
\end{corollary}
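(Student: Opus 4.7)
The plan is to deduce the dimension inequalities directly from Proposition \ref{Prop_elementary_move} by taking an $N \to \infty$ limit of the Schur values at $1^N$. The key classical fact I will use is the standard asymptotic relation
$$
 \dim(\lambda) = \lim_{N\to\infty} \frac{n!}{N^n}\, s_\lambda(1^N), \qquad |\lambda|=n,
$$
which is immediate from the Weyl dimension formula (already quoted in the proof of Proposition \ref{Prop_elementary_move}): indeed, $s_\lambda(1^N) = \prod_{a<b} \tfrac{\lambda_a-\lambda_b+b-a}{b-a}$ is a polynomial in $N$ of degree $n$, whose top-degree coefficient equals $\dim(\lambda)/n!$ by the hook-length formula.

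Given this, the argument is very short. Fix $N \ge \ell(\hat\lambda)$ large and consider the ratio
$$
 R_N := \frac{s_\lambda(1^N)\, s_{\hat\mu}(1^N)}{s_{\hat\lambda}(1^N)\, s_{\mu}(1^N)}.
$$
Numerator and denominator are both positive polynomials in $N$ of degree $2n-1$, and by the asymptotic formula above their leading coefficients are $\dim(\lambda)\dim(\hat\mu)/(n!(n-1)!)$ and $\dim(\hat\lambda)\dim(\mu)/(n!(n-1)!)$, respectively. Hence
$$
 \lim_{N\to\infty} R_N = \frac{\dim(\lambda)\,\dim(\hat\mu)}{\dim(\hat\lambda)\,\dim(\mu)}.
$$

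Now I invoke Proposition \ref{Prop_elementary_move}. In the case $r<i$, inequality \eqref{eq_monoton_elementary_1} gives $R_N \ge 1$ for every admissible $N$, so its limit is $\ge 1$, which rearranges to \eqref{eq_monoton_elementary_1_dim}. In the case $r>\hat i$, inequality \eqref{eq_monoton_elementary_2} gives $R_N \le 1$, whose limit yields \eqref{eq_monoton_elementary_2_dim}. No step here looks like a real obstacle: the only thing to be slightly careful about is that strictness/non-strictness of inequalities is preserved in the limit, which is automatic for weak inequalities. I would therefore expect the write-up of this corollary to amount to little more than citing Proposition \ref{Prop_elementary_move}, invoking the Weyl/hook-length formula to extract the top-degree coefficient in $N$, and passing to the limit.
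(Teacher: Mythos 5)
Your proposal is correct and follows essentially the same route as the paper: the paper's proof also consists of combining Proposition \ref{Prop_elementary_move} with the asymptotic relation between $s_\lambda(1^N)$ and $\dim(\lambda)$ as $N\to\infty$ (which the paper states, up to the factorial normalization you correctly include, and justifies by the explicit hook/content formulas). Your write-up merely makes the limiting-ratio step explicit.
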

\begin{proof} The statement follows from Proposition
\ref{Prop_elementary_move} and the limit relation
$$
 \dim(\lambda)=\lim_{N\to\infty}
 \frac{s_\lambda(1^N)}{N^{|\lambda|}}.
$$
The simplest way to prove the latter limit identity is through the explicit formulas for
$\dim(\lambda)$ and $s_\lambda(1^N)$, see e.g.\ \cite[Chapter I, Section 3, Examples 4-5 and
Section 5, Example 2]{M}.

Alternatively, one can  directly prove \eqref{eq_monoton_elementary_1_dim},
\eqref{eq_monoton_elementary_2_dim} along the lines of the proof of Proposition
\ref{Prop_elementary_move}.
\end{proof}

\begin{remark}
 Conjecture \ref{Conj_elementary_dimensions_t} can be obtained from Conjecture \ref{Conjecture_elementary_move} in the 
same way as Corollary \ref{Cor_elementary_dimensions} follows from Proposition \ref{Prop_elementary_move}.
\end{remark}

\begin{definition} \label{Def_cover} For two Young diagrams $\lambda,\hat \lambda\in\Y_n$
we say that $\lambda$ covers $\hat \lambda$ and write $\lambda\succ\hat \lambda$ if $\lambda$ and
$\hat \lambda$ differ by the move of the box $(i,j)\subset\lambda$ into the position $(\hat i,\hat
j)\subset\hat \lambda$ such that either $\hat i-i=1$, or $\hat j-j=-1$.
\end{definition}
An example illustrating Definition \ref{Def_cover} is shown in Figure \ref{Figure_cover}. It is
straightforward to check that if $\lambda\succ\hat \lambda$, then $\lambda\ge\hat \lambda$ and
further $\lambda$ and $\hat \lambda$ are immediate neighbours in the dominance order.

\begin{proof}[Proof of Theorem \ref{Theorem_monotonicity}]
It suffices to consider the case $k=n-1$, as the case of general $k<n$ would follow from the former
by induction. Further, due to the definition of the relation $\rho\ge \hat \rho$, it suffices to
consider the case when both these measures are atoms, i.e.\ $sup(\rho)=\lambda$ and $sup(\hat
\rho)=\hat \lambda$ with $\lambda\ge\hat \lambda$. Further, since the dominance order and
stochastic dominance relation are transitive, it suffices to consider the case when $\lambda$ and
$\hat \lambda$ are immediate neighbors in the partial order, i.e.\ $\lambda\succ\hat \lambda$.
Without loss of generality we assume that $\lambda$ and $\hat \lambda$ differ by the move of the
box $(i,j)\subset\lambda$ into the position $(\hat i,\hat j)\subset\hat \lambda$ such that $\hat
i-i=1$.

In the latter case $\pi^n_{n-1}(\rho)$ assigns the mass
\begin{equation}
\label{eq_x1}
 \frac{\dim(\mu)}{\dim(\lambda)}
\end{equation}
to each diagram $\mu\in\Y_{n-1}$, such that $\mu\pr\lambda$. Similarly, $\pi^n_{n-1}(\hat \rho)$
assigns the mass
\begin{equation}
\label{eq_x2}
 \frac{\dim(\hat \mu)}{\dim(\hat \lambda)}
\end{equation}
to each diagram $\hat \mu\in\Y_{n-1}$, such that $\hat \mu\pr\hat \lambda$. Subdivide all
$\mu\in\Y_{n-1}$, such that $\mu\pr\lambda$ into three sets
$$
 A^{\uparrow}_\lambda=\{\mu\in\Y_{n-1}\mid \lambda\setminus\mu=(r,c),\,
 r<
 i\},\quad A^{\downarrow}_\lambda=\{\mu\in\Y_{n-1}\mid \lambda\setminus\mu=(r,c),\,
 r>
 \hat i\},
$$
$$
 A^{=}_\lambda=\{\mu\in\Y_{n-1}\mid \lambda\setminus\mu=(r,c),\, i\le r\le \hat i\}.
$$
Now for $\mu\in A^{\uparrow}_\lambda\cup A^\downarrow_\lambda$ set $\hat \mu\prec\mu$ to be the
Young diagram obtained by moving the box $(i,j)$ into the position $(\hat i,\hat j)$. Now the
following three observations imply the stochastic dominance $\pi^n_{n-1}\rho\ge\pi^n_{n-1}\hat
\rho$:
\begin{itemize}
 \item All the Young diagrams from $A^{\uparrow}_\lambda\cup  A^{=}_\lambda
 \cup A^\downarrow_\lambda$ are linearly ordered (with respect to
 the dominance order) by $r$, which is the row number of the
 box being removed from $\lambda$. The same is true for $A^{\uparrow}_{\hat \lambda}\cup  A^=_{\hat
 \lambda}
 \cup A^\downarrow_{\hat \lambda}$.
 \item Each Young diagram from $A^{\downarrow}_\lambda\cup
A^=_\lambda$ dominates each Young diagram from $A^{\uparrow}_{\hat \lambda}\cup A^=_{\hat
\lambda}$.
 \item Due to Corollary \ref{Cor_elementary_dimensions} and formulas
 \eqref{eq_x1}, \eqref{eq_x2}, for each $\mu\in
 A^{\uparrow}_\lambda$ we have $(\pi^n_{n-1}\rho)(\mu)\le
 (\pi^n_{n-1}\hat \rho)(\hat \mu)$ and for each $\mu\in
 A^{\downarrow}_\lambda$ we have $(\pi^n_{n-1}\rho)(\mu)\ge
 (\pi^n_{n-1}\hat \rho)(\hat \mu)$. \qedhere
\end{itemize}
\end{proof}

\section{The Law of Large Numbers for the Young graph}
\label{Section_LLN}

The second ingredient of our proof of the Thoma theorem (Theorem \ref{Theorem_Thoma}) is the Law of
Large Numbers for the measures appearing in its formulation.
\begin{theorem}[The law of large numbers, \cite{VK_S}, \cite{KOO}, \cite{Bufetov}, \cite{Meliot}]
 \label{Theorem_LLN} Choose two strictly decreasing finite sequences $\alpha_1>\alpha_2>\dots>\alpha_a>0$,
 $\beta_1>\beta_2>\dots>\beta_b>0$ such that $\sum_{i=1}^a \alpha_i +\sum_{i=1}^b \beta_i =
 1$.

 For $n=1,2,\dots$ let $\lambda(n)\in\Y_n$ be a random Young diagram distributed according to
 the probability measure
 $$
  M_n^{(\alpha,\beta)}(\lambda)=\dim(\lambda) s_\lambda(\alpha,\beta).
 $$
 Then for each $i=1,\dots,a$ and each $j=1,\dots,b$ we have (in probability)
 $$
  \lim_{n\to\infty} \frac{\lambda_i(n)}{n}=\alpha_i,\quad \lim_{n\to\infty}
  \frac{\lambda'_j(n)}{n}=\beta_j.
 $$
\end{theorem}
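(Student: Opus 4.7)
The plan is to realize $M_n^{(\alpha,\beta)}$ as the distribution of the shape produced by a super-analogue of the Robinson--Schensted--Knuth correspondence applied to a random word in a signed alphabet, and then deduce the law of large numbers from a variational principle for unions of admissible subsequences combined with the classical LLN.

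First, introduce the signed alphabet $\mathcal A = \{1 < 2 < \cdots < a < \bar 1 < \cdots < \bar b\}$ and let $X_1, X_2, \ldots$ be i.i.d.\ letters with $\mathbb P(X_k = i) = \alpha_i$ and $\mathbb P(X_k = \bar j) = \beta_j$. Applying the Berele--Regev super-RSK insertion to $(X_1, \ldots, X_n)$ produces a pair $(P_n, Q_n)$ of tableaux of common shape $\lambda(n) \in \Y_n$, with $P_n$ super-semistandard and $Q_n$ standard. The super-Cauchy identity gives
$$
\mathbb P(\lambda(n) = \lambda) \;=\; \dim(\lambda)\, s_\lambda(\alpha,\beta) \;=\; M_n^{(\alpha,\beta)}(\lambda),
$$
so $\lambda(n)$ has exactly the distribution of interest.

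Next, invoke the super-Greene theorem, which identifies $\lambda_1(n) + \cdots + \lambda_k(n)$ with the maximum total length of $k$ disjoint row-admissible subsequences of $(X_1,\ldots,X_n)$, and dually for $\lambda'_1(n)+\cdots+\lambda'_k(n)$ in terms of column-admissible subsequences. Under the chosen ordering, a row-admissible subsequence is encoded by non-overlapping ordered intervals $I_1^{(m)}<\cdots<I_{a+b}^{(m)}\subseteq[0,n]$ that collect, in each positive interval $I_i^{(m)}$ ($i\leq a$), all occurrences of letter $i$, plus at most one copy of each barred letter in its own interval. By the ordinary LLN, for $k$ disjoint such subsequences the number of $i$-letters used is at most $\alpha_i\,|I_i^{(1)}\cup\cdots\cup I_i^{(k)}|+o(n)$. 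Combining this with $\sum_i|I_i^{(m)}|\leq n$ for each $m$ yields
$$
\tfrac{1}{n}\bigl(\lambda_1(n)+\cdots+\lambda_k(n)\bigr) \;\leq\; \max\!\Bigl\{\textstyle\sum_i \alpha_i u_i : 0\leq u_i\leq 1,\ \sum_i u_i\leq k\Bigr\} + o(1) \;=\; \alpha_1+\cdots+\alpha_k + o(1),
$$
and the matching lower bound is obtained by taking the $k$ subsequences to be the sets of all occurrences of letters $1,\ldots,k$ respectively. Together these yield $\lambda_i(n)/n \to \alpha_i$; the fully symmetric argument for columns, with positive and barred letters swapped, gives $\lambda'_j(n)/n \to \beta_j$.

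The main obstacle is the combinatorial setup: precisely formulating super-RSK (with its asymmetric bumping rules for positive vs.\ barred letters) and proving the corresponding super-Greene theorem is heavier than in the classical case, and one needs to control the $o(n)$ error uniformly over the family of admissible breakpoints via Hoeffding-type concentration for empirical letter counts. An algebraic alternative, due to Kerov--Olshanski--Okounkov, bypasses RSK entirely: compute $\mathbb E[p_k^*(\lambda(n))]/n^k$ using the character factorization $\chi^{(\alpha,\beta)}(\sigma)=\prod_\ell p_\ell(\alpha,\beta)^{m_\ell(\sigma)}$ together with the expansion of shifted power sums $p_k^*$ in normalized single-cycle characters, and close with a variance estimate on the algebra of shifted symmetric functions, see \cite{KOO}.
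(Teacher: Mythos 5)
Your proposal is correct and is essentially the same argument the paper gives: the authors' sketch also realizes $M_n^{(\alpha,\beta)}$ as the shape distribution of the Berele--Regev super-RSK image of an i.i.d.\ word in a signed alphabet, identifies $\lambda_1(n)+\dots+\lambda_s(n)$ and $\lambda_1'(n)+\dots+\lambda_s'(n)$ with maximal unions of $s$ disjoint $\lhd$-increasing (respectively $\rhd$-decreasing) subsequences, and concludes by the classical LLN, deferring the formal details to \cite{Bufetov}, \cite{Meliot}, \cite{Sniady_RSK}. Your closing remark about the shifted-symmetric-function alternative likewise matches the other proof strategies the paper lists.
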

\begin{remark}
 In fact, an analogue of Theorem \ref{Theorem_LLN} holds for all extreme measures of Theorem
 \ref{Theorem_Thoma}, see \cite{VK_S}, \cite{KOO}, \cite{Bufetov}, \cite{Meliot}. However, the
 present weaker form is enough for our purposes.
\end{remark}

There are at least four different approaches in the literature to the proof of Theorem
\ref{Theorem_LLN}:
\begin{itemize}
 \item The proof of the Thoma theorem in \cite{VK_S}, \cite{KOO} based on the relation of the
 dimensions in Young graph to the \emph{shifted} Schur functions, as a byproduct
 implies Theorem \ref{Theorem_LLN}. Note that we would like to avoid using
 this approach here, since our aim is to produce an independent proof of Thoma theorem.

 \item Vershik and Kerov in \cite{VK_RSK} showed how the random Young diagrams $\lambda(n)$ can be
  sampled using (a modification of) the classical Robsinson--Schensted correspondence, whose input
  is a sequence of $n$ i.i.d.\ discrete random variables. This observation allows to deduce Theorem
  \ref{Theorem_LLN} from the conventional Law of Large Numbers for sequences of independent
  random variables. For the details we refer to \cite{Bufetov}, where, in fact, a stronger Cental
  Limit Theorem was proved using this approach.

 \item Kerov explained in \cite{Kerov_CLT} (see also \cite{IvanovOlshanski}) how certain observables of random Young diagrams
 $\lambda(n)$ can be computed using the algebra of shifted--symmetric functions. The resulting
 formulas turn out to be well-suited for the asymptotics analysis along the lines of
 Theorem \ref{Theorem_LLN}, which was done in \cite{Meliot}.
 In fact, \cite{Meliot} also proves a stronger Central Limit Theorem.

 \item Following the approach of \cite{J-Annals}, \cite{BOO}, \cite{Ok_Schur}
 one proves that the \emph{poissonization} of measures $M_n$ can be described via
 a \emph{determinantal point process}, with an explicit contour integral expression for the kernel.
 Asymptotic analysis of this kernel via steepest descent gives Theorem \ref{Theorem_LLN}.
\end{itemize}

Each of the above four methods for proving Theorem \ref{Theorem_LLN} relies on a certain very
nontrivial (but known)  technique, which is the algebra of shifted--symmetric functions for the
first and third  approaches, the Robinson--Schensted correspondence for the second approach and
determinantal point processes / Schur measures for the forth one. Given the knowledge of this
technique the proof of Theorem \ref{Theorem_LLN} becomes relatively simple.

We now give a sketch of the second ``combinatorial'' proof of Theorem \ref{Theorem_LLN}, which is
based on the Robinson--Schensted correspondence.

\begin{proof}[Sketch of the proof of Theorem \ref{Theorem_LLN}]

Let us consider an alphabet $\mathcal T = \mathcal T^+ \cup \mathcal T^-$, where $\mathcal T^+ = \{
t_1^+, \dots, t_a^+ \}$ and $\mathcal T^- = \{ t_1^-, \dots, t_b^- \}$. Let us fix a linear order
on $\mathcal T$; its exact choice is irrelevant, so e.g.\ one can assume that $$
 t_{b}^{-}<t_{b-1}^{-}<\dots<t_1^{-}<t_1^{+}<\dots<t_q^{+}.
$$
For $x,y \in \mathcal T$ we write $x \lhd y$ if either $x < y$, or  $x=y \in \mathcal T^+$. We
write $x \rhd y$ if either $x>y$ or $x=y\in\mathcal T^{-}$. We call a word $x_1 \dots x_n \in
\mathcal A^n$ \textit{increasing} if $x_1 \lhd x_2 \lhd \dots \lhd x_n$, and \textit{decreasing} if
$x_1 \rhd x_2 \rhd \dots \rhd x_n$. For a word $w$ let us denote by $r_s (w)$ the maximal
cardinality of the union of $s$ disjoint increasing subsequences of the word $w$, and by $c_s (w)$
the maximal cardinality of the union of $s$ disjoint decreasing subsequences.

Now let us define the probability measure $\eta^{(\alpha,\beta)}$ on $\mathcal T$ such that
$\eta^{(\alpha,\beta)}(a_i) = \alpha_i$ and $\eta^{(\alpha,\beta)}(b_j) = \beta_j$. Let $w(n)$,
$n=1,2\dots$ be a random element of $\mathcal T^n$ distributed according to the product measure
$(\eta^{(\alpha,\beta)})^{\otimes n}$. Vershik-Kerov \cite{VK_RSK} relying on a generalization of
Robinson--Schensted correspondence (see also \cite{BereleRegev}) proved that the following equality
in distribution holds jointly for all $s=1,2,\dots$
\begin{equation}
\label{eq_RSK_identity} \lambda_1 (n) + \dots + \lambda_s (n) \stackrel{d}{=} r_s (w(n)), \qquad
\lambda_1'(n) + \dots + \lambda_s' (n) \stackrel{d}{=} c_s (w(n)).
\end{equation}
The identity \eqref{eq_RSK_identity} reduces Theorem \ref{Theorem_LLN} to the Law of Large Numbers
as $n\to\infty$ for $r_s(w(n))$ and $c_s(w(n))$, $s=1,2,\dots$. The latter is rather transparent.
Indeed, it is intuitively clear that the length of the longest increasing subsequence in the word
$w(n)$ should be (up to a small error) equal to the length of the subsequence of all letters
$t_1^+$ in $w(n)$, and the last length is approximately $\alpha_1 \cdot n$ due to the classical Law
of Large Numbers for independent random variables. Further, the main contribution to $r_s(w(n))$
comes when each subsequence contains only one letter from our alphabet, and thus $r_s(w(n))\approx
(\alpha _1+\dots+\alpha_s)\cdot n$ for $1\le s\le a$. Similarly, $c_s(w(n))\approx
(\beta_1+\dots,\beta_s)\cdot n$ for $1\le s\le b$. A formal proof based on this argument is given
in \cite[Theorem 2]{Bufetov}, see also \cite[Section 6]{Meliot} and \cite[Theorem 6.4]{Sniady_RSK}.
\end{proof}

\section{Proof of Thoma theorem}
\label{Section_Thoma}

We start by explaining informally the main idea behind the proof of Theorem \ref{Theorem_Thoma}.

For any $\lambda\in\Y_k$ we define a probability measure $\rho_{\lambda}$ on $\Y_k$ to be an atom
with support $\sup(\rho_{\lambda})=\lambda$. We start the proof from an abstract convex analysis
statement (Proposition \ref{Lemma_ergodic_method}) that any extreme coherent system $\{M_n\}$ can
be approximated by systems of the form $\pi_n^k(\rho_{\lambda(k)})$ for a sequence
$\lambda(k)\in\Y_k$, $k=1,2,\dots$. We further use the Law of Large Numbers to show in Lemma
\ref{Lemma_between} that when $k$ is large enough and after dropping out a tiny mass $\eps$, the
measure $\rho_{\lambda(k)}$ can be clutched  between two measures $M_{k}^{(\alpha^-,\beta^-)}$ and
$M_{k}^{(\alpha^+,\beta^+)}$. Moreover, they can be chosen so that the distance between
$(\alpha^-,\beta^-)$ and $(\alpha^+,\beta^+)$ is small. Now Theorem \ref{Theorem_monotonicity}
implies that $\pi_n^k(\rho_{\lambda(k)})$ is clutched between $M_{n}^{(\alpha^-,\beta^-)}$ and
$M_{n}^{(\alpha^+,\beta^+)}$. At this point we conclude that any coherent system $\{M_n\}$ can be
well-approximated by the coherent systems of the form $\{M_{n}^{(\alpha,\beta)}\}$,
$(\alpha,\beta)\in\Omega$. Therefore, the closedness of the latter set of coherent systems implies
Theorem \ref{Theorem_Thoma}.

The formal proof of Theorem \ref{Theorem_Thoma} is given at the end of this section after we
present
 a series of auxiliary statements.

\smallskip

\begin{proposition} \label{Lemma_ergodic_method}
 Let $\{M_n\}_{n=1}^{\infty}$ be an extreme coherent system of measures. Then there exists a
 (deterministic) sequence of Young diagrams $\lambda(k)\in\Y_k$, $k=1,2,\dots$ such that
 \begin{equation}
 \label{eq_approximation}
  M_n=\lim_{k\to\infty} \pi_n^k(\rho_{\lambda(k)}),\quad n=1,2,\dots.
 \end{equation}
\end{proposition}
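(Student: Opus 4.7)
The plan is to reformulate on the infinite path space. Let $\Omega_\infty$ denote the set of infinite paths $\tau=(\tau_0,\tau_1,\dots)$ with $\tau_j\in\Y_j$ and $\tau_j\pr\tau_{j+1}$. A coherent system $\{M_n\}$ corresponds uniquely to a probability measure $P$ on $\Omega_\infty$ that is \emph{central}: for every $n$ and every $\lambda\in\Y_n$, conditional on $\tau_n=\lambda$ the prefix $(\tau_0,\dots,\tau_n)$ is uniform over the $\dim(\lambda)$ paths from $\emptyset$ to $\lambda$. This correspondence is affine, so extreme coherent systems correspond to extreme central measures. Centrality yields the Markov property
$$
P(\tau_n=\nu\mid\tau_k,\tau_{k+1},\dots)=P(\tau_n=\nu\mid\tau_k)=\pi^k_n\rho_{\tau_k}(\nu)\qquad (n\le k),
$$
the last equality being a routine computation from the definitions of centrality and of the projection $\pi^k_n$.

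Granting for the moment that for extreme $P$ the tail $\sigma$-algebra $\mathcal{G}_\infty=\bigcap_{k}\mathcal{G}_k$, with $\mathcal{G}_k=\sigma(\tau_k,\tau_{k+1},\dots)$, is $P$-trivial, the conclusion is immediate. Backward martingale convergence applied to each bounded indicator $\mathbf{1}_{\tau_n=\nu}$ along the decreasing filtration $\{\mathcal{G}_k\}_{k\ge n}$ gives
$$
\pi^k_n\rho_{\tau_k}(\nu)=E_P[\mathbf{1}_{\tau_n=\nu}\mid\mathcal{G}_k]\longrightarrow E_P[\mathbf{1}_{\tau_n=\nu}\mid\mathcal{G}_\infty]=M_n(\nu)\qquad P\text{-a.s.}
$$
as $k\to\infty$. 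Since $\bigcup_n\Y_n$ is countable, the intersection of the full-measure sets over all pairs $(n,\nu)$ is again of full $P$-measure; picking any path $\tau$ in this intersection and setting $\lambda(k):=\tau_k$ yields the desired sequence.

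The main obstacle is thus establishing the triviality of $\mathcal{G}_\infty$ for extreme $P$. I would argue by contradiction. Suppose $A\in\mathcal{G}_\infty$ with $0<P(A)<1$, and form the conditionals $P_A(\,\cdot\,):=P(\,\cdot\mid A)$ and $P_{A^c}(\,\cdot\,):=P(\,\cdot\mid A^c)$, producing the nontrivial decomposition $P=P(A)P_A+P(A^c)P_{A^c}$. The crucial point is that both $P_A$ and $P_{A^c}$ remain central. Indeed, tail-measurability of $A$ implies that $\mathbf{1}_A$ is $\mathcal{G}_n$-measurable for every $n$, and the Markov property makes $\mathbf{1}_A$ conditionally independent of $(\tau_0,\dots,\tau_{n-1})$ given $\tau_n$; combined with centrality of $P$, this shows that for any two paths $(\lambda_0,\dots,\lambda_n)$ and $(\lambda_0',\dots,\lambda_n')$ from $\emptyset$ to a common $\lambda\in\Y_n$,
$$
P_A(\tau_0=\lambda_0,\dots,\tau_n=\lambda_n)=\frac{M_n(\lambda)\,P(A\mid\tau_n=\lambda)}{\dim(\lambda)\,P(A)},
$$
the same value as for the primed path. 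Hence $P_A$ is central, and likewise $P_{A^c}$, contradicting extremality of $P$ among central measures. This completes the plan, modulo the routine verification that the affine bijection between coherent systems and central measures preserves extremality.
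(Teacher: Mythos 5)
Your argument is correct and is precisely the ``ergodic method'' that the paper's proof invokes by reference to Vershik, Okounkov--Olshanski, and Diaconis--Freedman: reverse martingale convergence along the tail filtration of the path space, combined with tail-triviality of extreme central measures. The paper gives no details beyond the citation, and your write-up is a faithful, self-contained rendition of exactly that standard argument (the only steps left implicit, the monotone-class extension of the Markov property to the full $\sigma$-algebra $\mathcal{G}_n$ and the preservation of extremality under the affine bijection, are indeed routine).
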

\begin{proof}
 This is a particular case of a very general convex analysis statement, which was reproved many times in different
 contexts. Its first appearance in the asymptotic representation theory dates back to
 \cite{V_ergodic}, since then it is known as ``ergodic method''. The complete proofs of the statements generalizing Proposition \ref{Lemma_ergodic_method} can be found in \cite[Section
 6]{OkOlsh} or \cite[Theorem 1.1]{DF}.
\end{proof}

Recall that for two measures $\rho$, $\hat \rho$ on a finite set $A$, their total variation
distance is defined through
$$
 \d(\rho,\hat \rho)=\frac{1}{2}\sum_{a\in A} |\rho(a)-\hat \rho(a)|.
$$
We also define the $L_\infty$ distance between two pairs of sequences
$(\alpha,\beta)=(\alpha_1\ge\alpha_2\ge\dots,\beta_1\ge\beta_2\ge\dots)$, $(\hat \alpha,\hat
\beta)=(\hat \alpha_1\ge\hat \alpha_2\ge\dots,\hat \beta_1\ge\hat \beta_2\ge\dots)$ through
$$
 \di((\alpha,\beta),(\hat \alpha,\hat \beta))=\max\left( \sup_{i}|\alpha_i-\hat \alpha_i|,\, \sup_i
 |\beta_i-\hat \beta_i| \right).
$$

The following two lemmas explain that the metrics $\d$ on probability measures on $\Y_n$ and $\di$
on $\Omega$ are compatible.

\begin{lemma} \label{Lemma_convergence_on_the_level} For any $n=1,2,\dots$ we have
 \begin{equation}
 \label{eq_x3}
  \lim_{\eps\to 0} \sup_{\begin{smallmatrix}(\alpha,\beta),(\hat \alpha,\hat \beta)\in \Omega:\\
  \di((\alpha,\beta),(\hat \alpha,\hat \beta))\le \eps \end{smallmatrix}}
  \d(M_n^{(\alpha,\beta)},M_n^{(\hat \alpha,\hat \beta)})=0.
 \end{equation}
\end{lemma}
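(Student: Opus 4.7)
The plan is to reduce the estimate to a continuity statement for Newton power sums and then use the fact that Schur functions are polynomials in them. Since $|\Y_n|$ is finite and $M_n^{(\alpha,\beta)}(\lambda) = \dim(\lambda)\,s_\lambda(\alpha,\beta)$ by \eqref{eq_ext_measure_def}, one has
\begin{equation*}
 \d\bigl(M_n^{(\alpha,\beta)}, M_n^{(\hat\alpha,\hat\beta)}\bigr) \le \frac{1}{2}\sum_{\lambda \in \Y_n} \dim(\lambda)\,\bigl|s_\lambda(\alpha,\beta) - s_\lambda(\hat\alpha,\hat\beta)\bigr|,
\end{equation*}
so it is enough to prove that for each fixed $\lambda\in\Y_n$ the map $(\alpha,\beta)\mapsto s_\lambda(\alpha,\beta)$ is uniformly continuous on $\Omega$ with respect to the metric $\di$.

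Next I would express $s_\lambda$ as a fixed polynomial $P_\lambda$ in the Newton power sums $p_1, p_2, \dots, p_n$ (this is standard, see \cite[Chapter I, Section 2]{M}). On $\Omega$ each $p_k(\alpha,\beta)$ is uniformly bounded: $p_1\equiv 1$ by \eqref{eq_specialization}, while for $k\ge 2$ one has $|p_k(\alpha,\beta)| \le \sum_i \alpha_i^k + \sum_i \beta_i^k \le \sum_i(\alpha_i+\beta_i) \le 1$, using $\alpha_i,\beta_i\in[0,1]$. On a bounded set $P_\lambda$ is Lipschitz, so the problem reduces to showing that each map $(\alpha,\beta)\mapsto p_k(\alpha,\beta)$, $1\le k\le n$, is uniformly continuous on $\Omega$.

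The case $k=1$ is trivial. For $2\le k\le n$, starting from the elementary bound $|a^k-b^k| \le k|a-b|\max(a,b)^{k-1}$ and using $\max(\alpha_i,\hat\alpha_i)^{k-1}\le \max(\alpha_i,\hat\alpha_i)$ (valid since both quantities lie in $[0,1]$ and $k-1\ge 1$), I would write
\begin{equation*}
 \sum_{i\ge 1}|\alpha_i^k - \hat\alpha_i^k| \le k\eps \sum_{i\ge 1}\max(\alpha_i,\hat\alpha_i) \le k\eps \sum_{i\ge 1}(\alpha_i+\hat\alpha_i) \le 2k\eps,
\end{equation*}
where the last inequality uses $\sum_i \alpha_i\le 1$ and $\sum_i \hat\alpha_i\le 1$ from the definition of $\Omega$. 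Combining this with the analogous bound for the $\beta$-sums gives $|p_k(\alpha,\beta) - p_k(\hat\alpha,\hat\beta)| \le 4k\eps$, which closes the argument.

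I do not foresee a serious obstacle. The one point where care is needed is the $k=2$ case of the power-sum bound: the naive termwise estimate $\max(\alpha_i,\hat\alpha_i)^{k-1}\le (1/i)^{k-1}$ coming from the monotonicity of the $\alpha_i$ would yield a logarithmically divergent sum. This is neatly avoided by keeping one full factor of $\max$ and invoking the global constraint $\sum_i(\alpha_i+\beta_i)\le 1$ built into $\Omega$, as above.
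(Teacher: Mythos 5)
Your proposal is correct and follows essentially the same route as the paper: reduce to the individual Schur weights via finiteness of $\Y_n$, pass to the power sums $p_1,\dots,p_n$ using that $s_\lambda$ is a polynomial in them, and then bound $|p_k(\alpha,\beta)-p_k(\hat\alpha,\hat\beta)|$ by $4k\,\di((\alpha,\beta),(\hat\alpha,\hat\beta))$ using the factorization of $a^k-b^k$ together with the constraint $\sum_i(\alpha_i+\beta_i)\le 1$. The paper's proof is the same argument, only stated more tersely (it does not spell out the boundedness of the $p_k$ on $\Omega$ or the Lipschitz property of the polynomial expressing $s_\lambda$, which you make explicit).
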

\begin{proof} Note that $\Y_n$ is a finite, therefore it suffices to prove \eqref{eq_x3} with $\d$
replaced by $|M_n^{(\alpha,\beta)}(\lambda)-M_n^{(\hat \alpha,\hat \beta)}(\lambda)|$ for arbitrary
$\lambda\in\Y_n$. Moreover, due to the definition \eqref{eq_ext_measure_def}, it suffices to study
$|s_\lambda(\alpha,\beta)-s_\lambda(\hat \alpha,\hat \beta)|$. To analyze this difference recall
that the Schur function $s_\lambda$ is a polynomial in power sums $p_1,\dots,p_n$, which generate
the algebra of symmetric functions. We conclude that \eqref{eq_x3} is equivalent to
 \begin{equation}
 \label{eq_x4}
  \lim_{\eps\to 0} \sup_{\begin{smallmatrix}(\alpha,\beta),(\hat \alpha,\hat \beta)\in \Omega:\\
  \di((\alpha,\beta),(\hat \alpha,\hat \beta))\le \eps \end{smallmatrix}}
  |p_n(\alpha,\beta)-p_n(\hat \alpha,\hat \beta)|=0, \quad n=1,2,\dots.
 \end{equation}
 To prove \eqref{eq_x4} we recall the definition \eqref{eq_specialization} and first conclude that
 $$
  |p_1(\alpha,\beta)-p_1(\hat \alpha,\hat \beta)|=|1-1|=0.
 $$
 Further, for $n>1$ we have
 \begin{multline}
    |p_n(\alpha,\beta)-p_n(\hat \alpha,\hat \beta)|\le \sum_{i=1}^\infty
    |\alpha_i-\hat \alpha_i|\bigl( (\alpha_i)^{n-1}+(\alpha_i)^{n-2}
    (\hat \alpha_i)^1+\dots+(\hat \alpha_i)^{n-1} \bigr)
     \\ + \sum_{i=1}^\infty
    |\beta_i-\hat \beta_i| \bigl( (\beta_i)^{n-1}+(\beta_i)^{n-2} (\hat \beta_i)^1+\dots+(\hat \beta_i)^{n-1}
    \bigr)
   \\ \le \di((\alpha,\beta),(\hat \alpha,\hat \beta))\cdot n \sum_{i=1}^n \left[ (\alpha_i)^{n-1}+
    (\hat \alpha_i)^{n-1}+ (\beta_i)^{n-1}+ (\hat \beta_i)^{n-1} \right]\\ \le 4n \cdot
    \di((\alpha,\beta),(\hat \alpha,\hat \beta)),
 \end{multline}
which immediately implies \eqref{eq_x4}.
\end{proof}

\begin{lemma} \label{Lemma_closed}
 Let $(\alpha(k),\beta(k))$, $k=1,2,\dots$ be pairs of sequences. Suppose that for each
 $n=1,2,\dots$ the measures $M_n^{(\alpha(k),\beta(k))}$ converge in the sense of $\d$ to a
 measure $M_n$. Then there exists a pair of sequences $(\alpha,\beta)$ such that
 $M_n=M_n^{(\alpha,\beta)}$ for all $n$.
\end{lemma}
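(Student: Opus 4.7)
The plan is to reduce the lemma to a compactness argument for $\Omega$ together with the continuity statement of Lemma \ref{Lemma_convergence_on_the_level}. The overall strategy is: extract a convergent subsequence $(\alpha(k_j),\beta(k_j))\to(\alpha,\beta)$ in the metric $\di$, then push the convergence to the level of measures via Lemma \ref{Lemma_convergence_on_the_level}, and identify the limit with $M_n$.

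First I would establish that $\Omega$ is sequentially compact with respect to $\di$. Any $(\alpha,\beta)\in\Omega$ satisfies $i\alpha_i\le\alpha_1+\dots+\alpha_i\le 1$ and similarly $i\beta_i\le 1$, so each coordinate lies in the compact interval $[0,1/i]$. By a standard diagonal extraction from the product space $\prod_{i\ge 1}[0,1/i]\times\prod_{i\ge 1}[0,1/i]$, one finds a subsequence $(\alpha(k_j),\beta(k_j))$ that converges coordinatewise to some pair of nonincreasing sequences $(\alpha,\beta)$. The constraint $\sum_i(\alpha_i+\beta_i)\le 1$ passes to the limit by truncation (the partial sums $\sum_{i\le N}(\alpha_i(k_j)+\beta_i(k_j))$ are bounded by $1$ and converge to $\sum_{i\le N}(\alpha_i+\beta_i)$, so the supremum over $N$ is also bounded by $1$), hence $(\alpha,\beta)\in\Omega$. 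Upgrading coordinatewise convergence to $\di$ convergence uses the tail bounds $\alpha_i,\alpha_i(k_j)\le 1/i$ and analogously for $\beta$: given $\eps>0$, choose $N>2/\eps$ so that for $i>N$ the difference $|\alpha_i(k_j)-\alpha_i|\le 2/i<\eps$ automatically, and on the finite range $i\le N$ coordinatewise convergence yields uniform control for $j$ large.

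Having obtained $(\alpha(k_j),\beta(k_j))\to(\alpha,\beta)$ in $\di$, Lemma \ref{Lemma_convergence_on_the_level} gives $\d(M_n^{(\alpha(k_j),\beta(k_j))},M_n^{(\alpha,\beta)})\to 0$ as $j\to\infty$ for every fixed $n$. On the other hand, the hypothesis of the lemma asserts $\d(M_n^{(\alpha(k_j),\beta(k_j))},M_n)\to 0$. The triangle inequality for $\d$ then forces $M_n=M_n^{(\alpha,\beta)}$ for every $n$, which is the desired conclusion.

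The only mildly delicate point is verifying that coordinatewise convergence on $\Omega$ is actually equivalent to $\di$ convergence; this is the step where one must exploit the uniform tail decay $\alpha_i\le 1/i$ implied by the constraint $\sum_i\alpha_i\le 1$. Everything else is routine: compactness via Tychonoff/diagonalization, continuity of $s_\lambda(\alpha,\beta)$ already packaged in Lemma \ref{Lemma_convergence_on_the_level}, and uniqueness of limits in the total variation metric.
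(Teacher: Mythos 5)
Your proposal is correct and follows essentially the same route as the paper: compactness of $\Omega$ in the $\di$ topology (which the paper asserts is equivalent to pointwise convergence and which you verify via the tail bound $\alpha_i\le 1/i$), extraction of a limit point, and identification of the limit via Lemma \ref{Lemma_convergence_on_the_level}. You simply supply details the paper leaves implicit.
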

\begin{proof} We first claim that $\Omega$ is a compact set in the topology defined by $\di$.
Indeed, this topology on $\Omega$ is equivalent to the topology of pointwise convergence. For the
latter topology $\Omega$ is compact, since it is a closed subset of the compact set
$[0,1]^{\infty}$. Now we define $(\alpha,\beta)$ as a limiting point of the sequence of pairs
$(\alpha(k),\beta(k))$, $k=1,2,\dots$. Using Lemma \ref{Lemma_convergence_on_the_level} we conclude
that $M_n=M_n^{(\alpha,\beta)}$ for all $n$. \end{proof}

The next lemma is the key point of our proof of Theorem \ref{Theorem_Thoma}.

\begin{lemma} \label{Lemma_between}
 Take a sequence of integers $0<k(1)<k(2)<\dots$ and
 let $\lambda(n)\in\Y_{k(n)}$, $n=1,2,\dots$ be a sequence of Young diagrams such that the following
 limits exist for each $i=1,2,\dots$
 $$
  \lim_{n\to\infty} \frac{\lambda_i(n)}{k(n)}=\alpha_i,\quad \lim_{n\to\infty}
  \frac{\lambda'_i(n)}{k(n)}=\beta_i.
 $$
 Then for every $\eps>0$ and every $N\in\mathbb N$ there exists $n>N$, two measures $\rho^+_n$,
 $\rho^-_n$ on $\Y_{k(n)}$ and two pairs of sequences $(\alpha^+,\beta^+),(\alpha^-,\beta^-)\in\Omega$, such
 that
 \begin{enumerate}
  \item $\d\left(\rho^-_n,M_{k(n)}^{(\alpha^-,\beta^-)}\right)<\eps$ and $\d\left(\rho^+_n,M_{k(n)}^{(\alpha^+,\beta^+)}\right)<\eps$,
  \item $\di((\alpha^-,\beta^-),(\alpha^+,\beta^+))<\eps$,
  \item $\rho^-_n\le \rho_{\lambda(n)}\le \rho^+_n$ in the sense of stochastic dominance.
 \end{enumerate}
\end{lemma}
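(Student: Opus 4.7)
The plan is to construct two nearby perturbations $(\alpha^\pm,\beta^\pm)\in\Omega$ of $(\alpha,\beta)$, each with finite support summing to $1$ (so that Theorem~\ref{Theorem_LLN} applies), with $(\alpha^+,\beta^+)$ strictly dominating $(\alpha,\beta)$ and $(\alpha^-,\beta^-)$ strictly dominated by $(\alpha,\beta)$ in the appropriate partial-sum sense, and with $\di((\alpha^-,\beta^-),(\alpha^+,\beta^+))<\eps$. By the Law of Large Numbers, a typical sample from $M_{k(n)}^{(\alpha^+,\beta^+)}$ will then dominate $\lambda(n)$, and a typical sample from $M_{k(n)}^{(\alpha^-,\beta^-)}$ will be dominated by $\lambda(n)$. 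Restricting these measures to the respective dominance half-spaces of $\lambda(n)$ and reallocating the small missing mass to the atom at $\lambda(n)$ then yields the desired $\rho^\pm_n$.

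Before constructing the perturbations, I would first observe that $(\alpha,\beta)\in\Omega$: since $\sum_i\lambda_i(n)=\sum_j\lambda'_j(n)=k(n)$ and the first $a$ rows overlap with the first $b$ columns in at most $ab$ boxes, we have $\sum_{i=1}^a\lambda_i(n)+\sum_{j=1}^b\lambda'_j(n)\le k(n)+ab$, so passage to the limit gives $\sum_{i=1}^a\alpha_i+\sum_{j=1}^b\beta_j\le 1$ for all finite $a,b$. Next, fix $\eta\in(0,\eps/4)$ and truncation sizes $a,b$ beyond which the tails of $\alpha$ and $\beta$ are below $\eta$. To obtain $(\alpha^+,\beta^+)$ I would add $\eta$ to each of $\alpha_1,\ldots,\alpha_a$, subtract $\eta$ from each of $\beta_1,\ldots,\beta_b$ (replacing by $0$ where this would go negative), and absorb the remaining imbalance $\gamma-(a-b)\eta$ (where $\gamma=1-\sum\alpha-\sum\beta$) by appending $M$ small equal entries to the row sequence, choosing $M$ large enough that each appended value is at most $\eps/4$. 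The symmetric construction gives $(\alpha^-,\beta^-)$. Both lie in $\Omega$, sum to $1$, have finite support, and satisfy $\di((\alpha^\pm,\beta^\pm),(\alpha,\beta))<\eps/2$; in the extremal cases $((1),\emptyset)$ and $(\emptyset,(1))$ where one side cannot be perturbed, one simply takes $(\alpha^\pm,\beta^\pm)=(\alpha,\beta)$ on that side, exploiting that $M^\pm$ is then concentrated at the maximum (respectively minimum) element of $\Y_{k(n)}$.

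Now apply Theorem~\ref{Theorem_LLN} to $M_{k(n)}^{(\alpha^+,\beta^+)}$: for any $\delta>0$ and all $n$ sufficiently large, with probability at least $1-\delta$ a random $\mu$ satisfies $|\mu_i/k(n)-\alpha^+_i|<\eta/2$ and $|\mu'_j/k(n)-\beta^+_j|<\eta/2$ up to the support sizes of $\alpha^+,\beta^+$. Combined with the hypothesis $\lambda_i(n)/k(n)\to\alpha_i$, $\lambda'_j(n)/k(n)\to\beta_j$ and the $\eta$-margin in the construction, this gives $\mu_1+\ldots+\mu_s>\lambda_1(n)+\ldots+\lambda_s(n)$ for every $s$ up to a fixed cutoff. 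For larger $s$ both sums have already stabilized within $o(k(n))$ of $k(n)$, and mass conservation $|\mu|=|\lambda(n)|=k(n)$ together with the parallel column-partial-sum inequalities $\mu'_1+\ldots+\mu'_t<\lambda'_1(n)+\ldots+\lambda'_t(n)$ (which are controlled identically) forces the remaining row inequalities. Hence $P_{M^+}(\mu\ge\lambda(n))>1-\eps$ for $n$ large enough, and symmetrically $P_{M^-}(\mu\le\lambda(n))>1-\eps$.

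Finally, pick $n>N$ for which both LLN estimates hold. Let $A^+_n=\{\mu\in\Y_{k(n)}:\mu\ge\lambda(n)\}$ and set $\rho^+_n(\mu)=M^+_{k(n)}(\mu)$ for $\mu\in A^+_n\setminus\{\lambda(n)\}$ and $\rho^+_n(\lambda(n))=M^+_{k(n)}(\lambda(n))+(1-M^+_{k(n)}(A^+_n))$. Then $\rho^+_n$ is a probability measure supported in $A^+_n$, within total variation distance $1-M^+(A^+_n)<\eps$ of $M^+$, and $\rho_{\lambda(n)}\le\rho^+_n$ in the stochastic dominance of Definition~\ref{Def_dominance} (pair each atom of $\rho^+_n$ at some $\mu\ge\lambda(n)$ with an atom of $\rho_{\lambda(n)}$ at $\lambda(n)$ of the same mass). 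Defining $\rho^-_n$ analogously verifies all three conditions. The main obstacle is the uniformity in $s$ of the dominance inequalities: Theorem~\ref{Theorem_LLN} provides pointwise convergence of finitely many quantities, whereas dominance requires simultaneous control of up to $k(n)$ partial sums. The saving grace is that these inequalities are trivial beyond the effective supports of the sequences, and the two equivalent formulations (via rows $S_s$ and via columns $T_t$) together reduce the verification to finitely many critical inequalities each controlled by the $\eta$-margin.
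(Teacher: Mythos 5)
Your overall strategy coincides with the paper's: perturb $(\alpha,\beta)$ to finitely supported elements of $\Omega$ with total mass exactly $1$, invoke Theorem \ref{Theorem_LLN} to show that a typical $M^{(\alpha^+,\beta^+)}_{k(n)}$--sample dominates $\lambda(n)$, and build $\rho^\pm_n$ by restricting $M^\pm$ to the dominance cone of $\lambda(n)$ and moving the missing mass (less than $\eps$) to the atom at $\lambda(n)$; this last step, and the triangle inequality for $\di$, are exactly as in the paper. The genuine gap is in your construction of $(\alpha^+,\beta^+)$. Adding $\eta$ to \emph{every} $\alpha_1,\dots,\alpha_a$ and subtracting $\eta$ from every $\beta_1,\dots,\beta_b$ is a pointwise perturbation, and when $\gamma:=1-\sum_i\alpha_i-\sum_j\beta_j=0$ the ``remaining imbalance'' $\gamma-(a-b)\eta$ is negative whenever $a>b$ (e.g.\ $\alpha=(1/2,1/2,0,\dots)$, $\beta=0$), so there is nothing to append and no choice of $\eta>0$ makes the construction close up to total mass $1$. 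Your escape hatch covers only the two one-atom cases $((1),\emptyset)$ and $(\emptyset,(1))$, but the obstruction occurs for every parameter with $\gamma=0$ and at least two nonzero entries. The fix is the one your opening sentence already hints at but your construction does not implement: stochastic domination only requires a margin in the \emph{partial sums} $\sum_{i\le s}\alpha^+_i>\sum_{i\le s}\alpha_i$ (for $s$ below the support size), not pointwise domination, so one must \emph{tilt} --- increase the leading $\alpha_i$'s while allowing the trailing ones and the $\beta_j$'s to decrease --- which is precisely what the paper's geometrically weighted increments $\eps/(V2^i)$, the cutoff $R$, and the final renormalization of $\alpha_1^+$ are designed to accomplish.

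Two smaller points. First, Theorem \ref{Theorem_LLN} as stated requires \emph{strictly} decreasing sequences; your uniform shifts preserve ties in $\alpha$ and your $M$ appended entries are equal, so the theorem does not literally apply. This is harmless (make the increments strictly decreasing in $i$, as the paper's $\eps/(V2^i)$ weights do automatically), but it should be said. Second, your treatment of the uniformity in $s$ of the dominance inequalities is at about the same level of detail as the paper's (which is itself terse here); the honest version of the argument needs, besides the finitely many row and column partial-sum margins, the observation that a typical $M^{(\alpha^+,\beta^+)}$--sample has only $o(k(n))$ boxes outside the union of its first $R$ rows and first $b^+$ columns (because $\sum\alpha^++\sum\beta^+=1$), and that this excess is absorbed by the $\eta$--margin in the column bounds; your sketch is consistent with this but does not quite pin it down.
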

In words, Lemma \ref{Lemma_between} says that the delta--measure on a Young diagram of a large
level $\Y_k$ (after dropping a tiny mass $\eps$) can be always clutched between two measures
$M_{k}^{(\alpha^-,\beta^-)}$ and $M_{k}^{(\alpha^+,\beta^+)}$. Moreover, they can be chosen so that
the distance between $(\alpha^-,\beta^-)$ and $(\alpha^+,\beta^+)$ is small. The proof relies on
the Law of Large Numbers for the measures $M_k^{(\alpha,\beta)}$.
\begin{proof}[Proof of Lemma \ref{Lemma_between}] Take $L_\alpha,L_\beta > 0$ such that $\alpha_{L_\alpha}<\eps/2$ and
$\beta_{L_\beta}<\eps/2$, but $\alpha_i\ge \eps/2$ for all $i<L_\alpha$ and $\beta_j\ge\eps/2$ for
all $j<L_\beta$. Further choose $V>2$, such that $\alpha_{L_\alpha}<\eps/2-\eps/V$ and
$\beta_{L_\beta}<\eps/2-\eps/V$. We will now define the pair of sequences $(\alpha^+,\beta^+)$ as
follows.
$$
 \alpha^+_i=\alpha_i+\frac{\eps}{V\cdot 2^{i}}, \quad i=2,\dots,L_\alpha,\quad\quad
 \beta^+_j=\beta_j-\frac{\eps}{V\cdot 2^{L_\beta+1-j}}, \quad j=1,\dots,L_\beta.
$$
For $j>L_\beta$ we set $\beta^+_i=0$. For $i=L_{\alpha}+1,\dots,R$ we set
$\alpha_i^+=\eps/2-\eps/V+\frac{\eps}{V 2^i}$ where $R$ is the minimum integer such that
$$
S(R):= \left(\alpha_1+\frac{\eps}{2 V}\right) + \sum_{i=2}^{R+1} \alpha^+_i +\sum_{j=1}^{L_\beta}
\beta^+_j >1.
$$
Finally, set $\alpha_1^+=\alpha_1+\frac{\eps}{2 V}+(1-S(R-1))$ and $\alpha_i=0$ for $i>R$.

Note that the resulting $(\alpha^+,\beta^+)$ satisfies the assumptions of Theorem
\ref{Theorem_LLN}. Combining this theorem with the definition of numbers $\alpha_i$, $\beta_i$, we
conclude the existence of $N_1$ such that for all $n>N_1$ the diagram $\lambda(n)\in \Y_{k(n)}$ is
dominated by $M^{(\alpha^+,\beta^+)}_{k(n)}$--random Young diagram $\mu(n)$ with probability
greater than $(1-\eps)$. Thus, if we define $\rho^+(n)$ on $\Y_{k(n)}$ through the identity
$$
 \rho^+_n(\mu)=\begin{cases} M^{(\alpha^+,\beta^+)}_{k(n)}(\mu),& \mu> \lambda(n),\\
                             1-\sum_{\nu>\lambda(n)} M^{(\alpha^+,\beta^+)}_{k(n)}(\nu), &
                             \mu=\lambda(n),\\
                             0,& \text{otherwise.},
               \end{cases}
$$
then both $\d\left(\rho^+,M_{k(n)}^{(\alpha^+,\beta^+)}\right)<\eps$ and  $\rho_{\lambda(n)}\le
\rho^+_n$ hold.

Arguing similarly but with the roles of $\alpha$'s and $\beta$'s switched, we define
$(\alpha^-,\beta^-)$ and $\rho^-(n)$. It remains to note that
$$
 \di((\alpha^-,\beta^-),(\alpha^+,\beta^+))\le \di((\alpha^-,\beta^-),(\alpha,\beta))+
 \di((\alpha,\beta),(\alpha^+,\beta^+)) < \eps/2+\eps/2=\eps. \qedhere
$$
\end{proof}

\begin{proof}[Proof of Theorem \ref{Theorem_Thoma}] Let $\{M_r\}_{r=1}^{\infty}$ be an extreme coherent system of measures and let $\lambda(k)\in\Y_k$, $k=1,2,\dots$
 be a corresponding sequence of Young diagrams as in Proposition \ref{Lemma_ergodic_method}. Since for all $i=1,2,\dots$, we have
 $0\le \lambda_i(k)/k \le 1$ and $0\le \lambda_i'(k) \le 1 $, passing to a
 subseqence $k(n)$, $n=1,2,\dots$ we can assume that the following limits exist
 $$
  \lim_{n\to\infty} \frac{\lambda_i(k(n))}{k(n)}=\alpha_i,\quad \lim_{n\to\infty}
  \frac{\lambda'_i(k(n))}{k(n)}=\beta_i.
 $$
 Now we choose $\eps(n)=1/n$. Passing, if necessary, to another subsequence (which we will denote by the same $k(n)$ to avoid complicating the notations) and using Lemma
 \ref{Lemma_between}, we conclude that there exist $(\alpha^-(n),\beta^-(n)),
 (\alpha^+(n),\beta^+(n))\in\Omega$ and measures $\rho^+(n),\rho^-(n)$ on $\Y_{k(n)}$ such that
 \begin{enumerate}
  \item $\d\left(\rho^-(n),M_{k(n)}^{(\alpha^-(n),\beta^-(n))}\right)<\frac1n$ and $\d\left(\rho^+(n),M_{k(n)}^{(\alpha^+(n),\beta^+(n))}\right)<\frac1n$,
  \item $\di\bigl((\alpha^-(n),\beta^-(n)),\,(\alpha^+(n),\beta^+(n))\bigr)<\frac1n$,
  \item $\rho^-(n)\le \rho_{\lambda(k(n))}\le \rho^+(n)$ in the sense of stochastic dominance.
 \end{enumerate}
Now choose any $r=1,2,\dots$. We aim to prove that $M_r=\lim_{n\to\infty}
M_{r}^{(\alpha^-(n),\beta^-(n))}$ in the sense of $\d$. For that note that since each map $\pi^m_k$
is a contraction in $\d$ distance, Lemma \ref{Lemma_convergence_on_the_level} implies as
$n\to\infty$
\begin{multline}
\label{eq_x5}
 \d(\pi^{k(n)}_r\rho^-(n),\pi^{k(n)}_r\rho^+(n))\le
 \d(\pi^{k(n)}_r\rho^-(n),M_r^{(\alpha^-(n),\beta^-(n))})\\+
 \d\left(M_r^{(\alpha^-(n),\beta^-(n))},M_r^{(\alpha^+(n),\beta^+(n))}\right)+\d(M_r^{(\alpha^+(n),\beta^+(n))},\pi^{k(n)}_r\rho^+(n))
\\ \le
 \frac{2}{n}+
 \d\left(M_r^{(\alpha^-(n),\beta^-(n))},M_r^{(\alpha^+(n),\beta^+(n))} \right) \to 0.
\end{multline}
We claim that the last inequality implies that
\begin{equation}
\label{eq_x6}  \d\left(\pi^{k(n)}_r\rho^-(n), \pi^{k(n)}_r\rho_{\lambda(k(n))} \right) \to 0.
\end{equation}
Indeed, by Theorem \ref{Theorem_monotonicity}
$$
 \pi^{k(n)}_r\rho^-(n)\le \pi^{k(n)}_r \rho_{\lambda(k(n))} \le \pi^{k(n)}_r\rho^+(n).
$$
Thus, for any upper\footnote{By the definition an upper set $U$ in a partially ordered set $A$
satisfies the property that if $x\in U$ and for some $y\in A$ we have $x<y$, then also $y\in U$.}
set $U\subset \Y_r$ we have
$$
 \pi^{k(n)}_r\rho^-(n)(U)\le \pi^{k(n)}_r \rho_{\lambda(k(n))} (U) \le \pi^{k(n)}_r\rho^+(n)(U).
$$
Therefore, as $n\to\infty$
\begin{multline} \label{eq_x7}
 |\pi^{k(n)}_r\rho^-(n)(U) - \pi^{k(n)}_r \rho_{\lambda(k(n))}(U)|\le | \pi^{k(n)}_r\rho^-(n)(U) -\pi^{k(n)}_r\rho^+(n)(U)
 |
 \\ \le \d(\pi^{k(n)}_r\rho^-(n),\pi^{k(n)}_r\rho^+(n)) \to 0.
\end{multline}
Note that for any $\lambda \in \Y_r$ both $\{ \mu\in\Y_r: \mu \ge \lambda \}$ and $\{ \mu\in\Y_r:
\mu
> \lambda \}$ are upper sets, whose difference is $\{\lambda\}$. Therefore, \eqref{eq_x7} implies \eqref{eq_x6}. Now combining
\eqref{eq_x6} with \eqref{eq_approximation} and with inequality
$\d(\pi^{k(n)}_r\rho^-(n),M_r^{(\alpha^-(n),\beta^-(n))})\le 1/n$, we prove that
$$
 M_r=\lim_{n\to\infty}
M_{r}^{(\alpha^-(n),\beta^-(n))}.
$$
Now it remains to apply Lemma \ref{Lemma_closed}.
\end{proof}

\end{document}